\newtheorem{thm}{Theorem}[section]
\newtheorem{lem}[thm]{Lemma}
\theoremstyle{definition}
\newtheorem{case}{Case}[thm]
\makeatletter \@addtoreset{equation}{section} \makeatother
\def\fl#1{\lfloor{#1}\rfloor}
\def\bfl#1{\bigl\lfloor\,{#1}\,\bigr\rfloor}
\def\bg#1{\bigl({#1}\bigr)}
\def\bgg#1{\biggl({#1}\biggr)}
\def\C{\mathbb{C}}
\def\R{\mathbb{R}}
\def\rmand{\quad\hbox{ and }\quad}
\DeclareMathOperator\sgn{\mathrm{sgn}}
\crefname{case}{Case}{Cases}
\crefname{case}{Case}{Cases}
\crefname{eg}{Eg.}{Egs.}
\Crefname{eg}{Example}{Examples}
\crefname{ineq}{Ineq.}{Ineqs.}
\Crefname{ineq}{Inequality}{Inequalities}
\crefname{lem}{Lemma}{Lemmas}
\Crefname{lem}{Lemma}{Lemmas}
\crefname{rec}{Recurrence}{Recurrences}
\Crefname{rec}{Recurrence}{Recurrences}
\crefname{sec}{\S\!}{\S\!}
\Crefname{sec}{Section}{Sections}
\crefname{ssec}{\S\!}{\S\!}
\Crefname{ssec}{Subsection}{Subsections}
\crefname{thm}{Theorem}{Theorems}
\Crefname{thm}{Theorem}{Theorems}
\numberwithin{equation}{section}
\numberwithin{figure}{section}
\xpatchcmd{\itemize}{\def\makelabel}{\setlength{\itemsep}{5pt}\def\makelabel}{}{}
\def\L{\mathcal{L}}
\begin{document}

\title[]{{Root geometry of polynomial sequences III:\\[5pt]
Type $(1,1)$ with positive coefficients}}

\author[D.G.L. Wang]{David G.L. Wang$^\dag$$^\ddag$}
\address{
$^\dag$School of Mathematics and Statistics, Beijing Institute of Technology, 102488 Beijing, P. R. China\\
$^\ddag$Beijing Key Laboratory on MCAACI, Beijing Institute of Technology, 102488 Beijing, P. R. China}
\email{david.combin@gmail.com}

\author[J.J.R. Zhang]{Jerry J.R. Zhang$^\dag$}
\address{
$^\dag$School of Mathematics and Statistics, Beijing Institute of Technology, 102488 Beijing, P. R. China}
\email{jrzhang.combin@gamil.com}

\keywords{Interlacing property; real-rootedness; recurrence; root distribution}
\begin{abstract}   
In this paper, we study the root distribution of some univariate polynomials $W_n(z)$ satisfying a recurrence of order two with  linear polynomial coefficients over positive numbers. We discover a sufficient and necessary condition for the overall real-rootedness of all the polynomials, in terms of the polynomial coefficients of the recurrence. Moreover, in the real-rooted case, we find the set of limits of zeros, which turns out to be the union of a closed interval and one or two isolated points; when non-real-rooted polynomial exists, we present a sufficient condition under which every polynomial with $n$ large has a real zero.
\end{abstract}
\subjclass[2010]{03D20, 26C10, 30C15}
%03D20: Recursive functions and relations, subrecursive hierarchies
%26C10: Polynomials: location of zeros
%30C15: Zeros of polynomials, rational functions, and other analytic functions
%(e.g. zeros of functions with bounded Dirichlet integral) 
\maketitle
\parskip 9pt

\section{Introduction}
The root distribution of a single polynomial is a long-standing topic all along the history of mathematics; 
see Rahman and Schmeisser's book \cite{RS02B}. 
Motived by the LCGD conjecture from topological graph theory,
Gross, Mansour, Tucker and the first author~\cite{GMTW16-01,GMTW16-10}
studied the root distribution of polynomials satisfying some recurrences 
of order two, with one of the polynomial coefficients in the recurrence linear and the other constant. 
The generating function of such polynomials $W_n(z)$ is
\begin{equation}\label{gf:W}
\sum_{n\ge0}W_n(z)t^n=\frac{1+(z-A(z))t}{1-A(z)t-B(z)t^2},
\end{equation}
where $A(z)$ and $B(z)$ are the polynomial coefficients in the recurrence.
Orthogonal polynomials and quasi-orthogonal polynomials have closed relations with
such recurrences; 
see Andrews, Richard and Ranjan~\cite{ARR99B} and Brezinski, Driver and Redivo-Zaglia~\cite{BDR04}.
A general study for common zeros as a particular case of root distribution for 
polynomials defined by recurrences of order two can be found in \cite{JW17X}.

In the study of root distribution, 
both the real-rootedness and the limiting distribution of zeros of the polynomials 
received much attention. 
Stanley on his website \cite{StaW}
provides some figures for the root distribution of some polynomials in a sequence 
arising from combinatorics. 
The significance of real-rootedness of polynomials can be found from~\cite[\S 4]{Sta00}.
See also~\cite{BM06,BG07,BG08,GHR09}.

In this paper, we continue the study of root distribution of
polynomials generated by \cref{gf:W}, where both $A(z)$ and $B(z)$ are linear and over $\R^+$.
It turns out that the linearities bring much richer root geometry.
We find a sufficient and necessary condition for the real-rootedness 
of such polynomials,
and determine the set of limits of zeros when the polynomials are real-rooted,
by using the characterization of the limits given by Beraha, Kahane and Weiss~\cite{BKW75,BKW78}.
A study on the root distribution of polynomials generated by \cref{gf:W} with
another sort of linear polynomial coefficients $A(z)$ and $B(z)$
can be found in \cite{WZ17X}, in which we introduced the notion of piecewise interlacing zeros
to establish the real-rootedness subject to some technical conditions,
but failed to find a sufficient and necessary condition for the overall real-rootedness.
Polynomials generated by the function
\[
\sum_{n\ge0}W_n(z)t^n=\frac{1}{1-A(z)t-B(z)t^2}
\]
has been investigated in \cite{Tra14}, 
in which Tran found an algebraic curve containing the zeros of all polynomials with large subscripts.

This paper is organised as follows. In \cref{sec:main}, we state the main results \cref{thm:cri:RR,thm:lz,thm:xA>xB:1R}. The first two results are shown in \cref{sec:xA<=xB}
while the last result is proved in \cref{sec:xA>xB}.

\section{Main results}\label[sec]{sec:main}

One main result of this paper is a characterisation for the real-rootedness of 
some polynomials defined by a recurrence.

\begin{thm}\label{thm:cri:RR}
Let $\{W_n(z)\}_n$ be the polynomial sequence satisfying the recurrence
\begin{equation}\label[rec]{rec11}
W_n(z) = (az+b)W_{n-1}(z)+(cz+d)W_{n-2}(z). 
\end{equation}
with $W_0(z)=1$ and $W_1(z)=z$, where $a,b,c,d>0$.
Then every polynomial $W_n(z)$ is real-rooted if and only if $ad\le bc$.
\end{thm}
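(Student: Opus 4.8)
The plan is to solve \cref{rec11} explicitly through its characteristic equation and to read the location of the zeros off the closed form. Write $A(z)=az+b$ and $B(z)=cz+d$, with respective zeros $x_A=-b/a$ and $x_B=-d/c$, and observe that the hypothesis $ad\le bc$ is exactly the inequality $x_A\le x_B$. The characteristic roots of \cref{rec11} are $\lambda_\pm=\tfrac12(A\pm\sqrt{\Delta})$ with $\Delta(z)=A(z)^2+4B(z)$, and the initial data $W_0=1$, $W_1=z$ yield
\[
W_n(z)=\frac{(z-\lambda_-)\lambda_+^{\,n}+(\lambda_+-z)\lambda_-^{\,n}}{\lambda_+-\lambda_-}.
\]
Here $\Delta$ is a quadratic with positive leading coefficient whose own discriminant equals $16\bg{a(bc-ad)+c^2}$; under $ad\le bc$ this is at least $16c^2>0$, so $I:=\{z\in\R:\Delta(z)<0\}$ is a nonempty bounded open interval. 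Since $\Delta(x_B)=A(x_B)^2\ge 0$, the interval $I$ lies in $(-\infty,x_B)$, and in particular $B<0$ throughout $I$.

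For the forward implication I would study $W_n$ on $I$, where $\lambda_\pm$ are complex conjugates. Setting $\rho=\sqrt{-B}>0$ and $\lambda_\pm=\rho e^{\pm i\theta}$ with $\theta\in(0,\pi)$, so that $\rho^2=-(cz+d)$ and $2\rho\cos\theta=az+b$, a short simplification turns the closed form into the Chebyshev-type expression
\[
W_n(z)=\frac{\rho^{\,n-1}}{\sin\theta}\bg{z\sin n\theta-\rho\sin(n-1)\theta},\qquad z\in I,
\]
which I would verify for $n=1,2$. As $z$ traverses $I$ the angle $\theta$ sweeps across $(0,\pi)$, and at the angles $\theta_k=k\pi/n$ the first term vanishes and $W_n$ collapses to $(-1)^k\rho^{\,n}$, of sign $(-1)^k$ for $k=1,\dots,n-1$. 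These $n-1$ alternations furnish $n-2$ distinct real zeros of $W_n$ in the interior of $I$; a sign comparison at one end of $I$ (equivalently as $\theta\to\pi^-$) supplies an $(n-1)$st real zero, and since $W_n$ has real coefficients its non-real zeros occur in conjugate pairs, so the single remaining zero is real as well. This proves real-rootedness for $ad<bc$, and the boundary case $ad=bc$ then follows by continuity: each $W_n$ depends polynomially on $(a,b,c,d)$ with fixed degree $n$ and leading coefficient $a^{n-1}$, so Hurwitz's theorem carries real-rootedness from the open region $ad<bc$ onto its closure.

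For the converse I would locate the limits of zeros by the theorem of Beraha, Kahane and Weiss, which places them on the equimodular locus $\operatorname{Re}\bg{\overline{A}\,\sqrt{\Delta}}=0$ (where $|\lambda_+|=|\lambda_-|$) together with the at most two points where the coefficient of the dominant root vanishes, namely the zeros of $(1-a)z^2-(b+c)z-d$. When $ad>bc$ one has $B(x_A)=(ad-bc)/a>0$, so at $z=x_A$ the two characteristic roots are the real opposite numbers $\pm\sqrt{B(x_A)}$, of equal modulus; expanding the equimodular condition near $x_A$, where $A=a(z-x_A)$ and $\sqrt{\Delta}\approx 2\sqrt{B(x_A)}$, gives $\operatorname{Re}(z)=x_A$ to leading order, so the locus meets the real axis transversally and therefore carries non-real points. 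As these are co-dominant, Beraha--Kahane--Weiss makes them genuine limits of zeros, so $W_n$ has a non-real zero for all large $n$ and not every $W_n$ is real-rooted.

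The step I expect to be the main obstacle is the production of the last one or two real zeros in the forward direction: the $n-2$ interior zeros are automatic, but the endpoint sign comparison that yields the $(n-1)$st zero is exactly where the positivity of $a,b,c,d$ and the inequality $ad\le bc$ must be invested, and it is entangled with the real location of the isolated limit points $(1-a)z^2-(b+c)z-d=0$. In the converse the corresponding delicate point is the verification of the non-degeneracy hypotheses of Beraha--Kahane--Weiss --- that $\lambda_+$ and $\lambda_-$ are not proportional and that neither coefficient vanishes identically along the relevant arc --- so that the non-real equimodular points are not spuriously removed from the set of limits of zeros.
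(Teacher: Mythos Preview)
Your necessity argument is essentially the paper's: both invoke Beraha--Kahane--Weiss at $z=x_A$, observing that when $ad>bc$ one has $\Delta(x_A)=4B(x_A)>0$, so $x_A$ lies on the equimodular curve $|\lambda_+|=|\lambda_-|$ while no nearby \emph{real} point does, hence the curve leaves the real axis there and its non-real points are genuine limits of zeros. The non-degeneracy checks you worry about are exactly the ones the paper makes ($\{W_n\}$ satisfies no first-order recurrence, and $f(0)\ne 0$).

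For sufficiency your route is genuinely different. The paper never writes down the closed Chebyshev form; it proves by induction that the zero sets of $W_{n-2},W_{n-1},W_n$ are \emph{piecewise interlacing} on a fixed partition $J_1\cup J_2\cup J_3\cup J_4$ of a certain interval, the sign analysis being driven by the order-four recurrence $W_n=(A^2+2B)W_{n-2}-B^2W_{n-4}$. Your trigonometric count is more direct on the interior of $I=\{\Delta<0\}$: it delivers $n-2$ real zeros with almost no work (it is worth recording that $\theta$ is in fact monotone on $I$, since the critical point $z^*=(bc-2ad)/(ac)$ of $A/(2\sqrt{-B})$ satisfies $\Delta(z^*)=4(bc-ad)\bigl(a(bc-ad)+c^2\bigr)/(ac^2)>0$, so the $n-1$ sample points are automatically ordered).

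The obstacle you flag is real, and it is precisely where the two approaches meet the same bookkeeping. Your proposed comparison ``as $\theta\to\pi^-$'', i.e.\ at $x_\Delta^-$, produces the $(n-1)$st zero only when $h(x_\Delta^-)=(2-a)x_\Delta^--b\le 0$; this is automatic for $a\le 2$ but can fail for $a>2$ even under $ad<bc$ (e.g.\ $a=3$, $b=1$, $c=10$, $d=1$ gives $x_\Delta^-=-5$ and $h(x_\Delta^-)=4>0$, whence $\sgn W_n(x_\Delta^-)=(-1)^{n+1}$ for all $n\ge 4$, matching the sign at your leftmost sample point). The fix is exactly what you suspect: when $a>2$ replace $x_\Delta^-$ by the smaller root $x_g^-$ of $g(z)=(1-a)z^2-(b+c)z-d$, where $W_n(x_g^-)=(x_g^-)^n$ has sign $(-1)^n$; since $x_g^-\le x_\Delta^-$ in this regime, the comparison with your leftmost sample still yields the $(n-1)$st real zero, after which your parity argument finishes. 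This is the same case split the paper makes in setting $u=x_\Delta^-$ for $a\le 2$ and $u=x_g^-$ for $a>2$. With that patch your argument is complete; the Hurwitz passage to the boundary $ad=bc$ is fine as stated.
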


The sufficiency part of \cref{thm:cri:RR} 
will be established by proving that consecutive polynomials
have piecewise interlacing zeros by induction,
while the necessity part is shown with the aid of the limits of zeros;
see \cref{sec:sufficiency,sec:necessity} for details respectively.

Another main result is about the limit of zeros of such polynomials 
for the real-rooted case $ad<bc$.
A complex number $x$ is said to be a {\em limit of zeros} of a sequence $\{W_n(z)\}_n$ of polynomials
if there is a zero $z_n$ of $W_n(z)$ for each $n$ 
such that $\lim_{n\to\infty}z_n=x$. 
Such a limit $x$ is said to be {\em non-isolated} 
if $x$ is a limit point of the set of all limits of zeros.

In this paper, we consider polynomials $W_n(z)$ satisfying \cref{rec11} with $a,b,c,d>0$.
Let $\Delta_\Delta=-a^2d+abc+c^2$ and $\Delta_g=(b+c)^2+4d(1-a)$.
Let
\begin{align*}
x_\Delta^{\pm}&=\frac{-ab-2c\pm2\sqrt{\Delta_\Delta}}{a^2}\rmand\\[5pt]
x_g^\pm&=\begin{cases}
\displaystyle
\frac{b+c}{2(1-a)}\pm\frac{\sqrt{\Delta_g}}{2|1-a|},&\text{if $a\ne1$},\\[8pt]
\displaystyle -{d\over b+c},&\text{if $a=1$}.
\end{cases}
\end{align*}
Suppose that $ad\le bc$.
It is direct to check that $\Delta_\Delta>0$ and $\Delta_g>0$. 
Then $x_\Delta^\pm,x_g^\pm\in\R$ and $x_\Delta^-<-b/a<x_\Delta^+$.

\begin{thm}\label{thm:lz}
Let $\{W_n(z)\}_n$ be the polynomial sequence satisfying \cref{rec11} with $W_0(z)=1$ and $W_1(z)=z$, where $a,b,c,d>0$.
Then the points $-b/a$ and $x_\Delta^\pm$ 
are non-isolated limits of zeros of $\{W_n(z)\}_n$.
Moreover, when $ad<bc$, the set of limits of zeros is
\[
\mathcal{L}^*=\begin{cases}
[x_\Delta^-,\,x_\Delta^+]\cup\{x_g^-\},
&\text{if $0<a\le 1$};\\[4pt]
[x_\Delta^-,\,x_\Delta^+]\cup\{x_g^+\},
&\text{if either $1<a\le 2$, or $a>2$ and $\Delta_\Delta\le \Delta_g$};\\[4pt]
[x_\Delta^-,\,x_\Delta^+]\cup\{x_g^\pm\},&\text{if $a>2$ and $\Delta_\Delta>\Delta_g$}.
\end{cases}
\]
\end{thm}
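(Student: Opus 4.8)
The plan is to solve \cref{rec11} in closed form and then read off the limits of zeros from the Beraha--Kahane--Weiss (BKW) characterisation. Writing $A=az+b$ and $B=cz+d$, the characteristic equation $\lambda^2-A\lambda-B=0$ has roots $\lambda_\pm=\tfrac12\bg{A\pm\sqrt{D}}$, where $D=D(z)=A^2+4B=a^2z^2+(2ab+4c)z+(b^2+4d)$. Since $\lambda_++\lambda_-=A$ and $\lambda_+\lambda_-=-B$, the denominator in \cref{gf:W} factors as $(1-\lambda_+t)(1-\lambda_-t)$, and a partial-fraction expansion yields $W_n=c_1\lambda_+^{\,n}+c_2\lambda_-^{\,n}$ with
\[
c_1=\frac{z-\lambda_-}{\lambda_+-\lambda_-},\qquad c_2=\frac{\lambda_+-z}{\lambda_+-\lambda_-};
\]
one verifies $c_1+c_2=1=W_0$ and $c_1\lambda_++c_2\lambda_-=z=W_1$. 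Because $ad\le bc$, \cref{thm:cri:RR} makes every $W_n$ real-rooted, so all limits of zeros are real and it suffices to examine real $z_0$.

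By the BKW theorem, a point $z_0$ is a limit of zeros exactly when either (a) $|\lambda_+(z_0)|=|\lambda_-(z_0)|$ is the maximal modulus and $c_1(z_0),c_2(z_0)\neq0$, or (b) one root strictly dominates in modulus but its coefficient vanishes at $z_0$; the mild non-degeneracy hypothesis holds since $\lambda_+/\lambda_-$ is non-constant. For regime (a) with $z_0\in\R$, equal modulus forces $\lambda_\pm$ to be a conjugate pair, i.e. $D(z_0)\le0$. The discriminant of $D$ in $z$ equals $16\Delta_\Delta$, so $D\le0$ precisely on $[x_\Delta^-,x_\Delta^+]$; on the interior $\lambda_\pm\notin\R$ forces $c_1,c_2\neq0$, whence every interior point is a limit and the whole interval consists of non-isolated limits. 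As $D(-b/a)=4(ad-bc)/a\le0$, the point $-b/a$ lies in this interval, and the endpoints $x_\Delta^\pm$ are non-isolated as limits of interior limits; this proves the first assertion.

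For regime (b) I would locate the vanishing of the dominant coefficient. Since $c_1=0\iff z=\lambda_-(z)$ and $c_2=0\iff z=\lambda_+(z)$, in either case $z$ solves its own characteristic equation $z^2-Az-B=0$, that is $(1-a)z^2-(b+c)z-d=0$, whose roots are exactly $x_g^\pm$. Any real such $z_0$ satisfies $D(z_0)=(2z_0-A)^2\ge0$, so $x_g^\pm\notin(x_\Delta^-,x_\Delta^+)$ automatically. At such a point the surviving root equals $z_0$ and the killed root is the other root $\mu=-B(z_0)/z_0$; regime (b) requires the killed root to dominate, $|\mu|>|z_0|$, which collapses to the single clean criterion
\[
|c\,x_g+d|>x_g^{\,2}.
\]
Thus $x_g^\pm$ is an isolated limit if and only if it satisfies this inequality, and the task reduces to testing it at the two values $x_g^+$ and $x_g^-$.

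The final, and hardest, step is this test. Using $x_g^{\,2}=\bg{(b+c)x_g+d}/(1-a)$ to clear the square (with the case $a=1$, where $x_g=-d/(b+c)$, treated separately) and factoring the resulting difference of squares, the criterion becomes the sign condition
\[
\bg{(b+ac)x_g+ad}\,\bg{(b+(2-a)c)x_g+(2-a)d}<0,
\]
whose two linear factors change behaviour as $a$ passes $1$ and $2$; this is the source of the three regimes. For $0<a\le1$ one has $x_g^-<0<x_g^+$ and only $x_g^-$ passes; for $1<a\le2$ both roots are negative and only $x_g^+$ passes. The delicate range is $a>2$, where testing $x_g^-$ amounts, after isolating the remaining square root, to comparing $\Delta_\Delta$ with $\Delta_g$, so that $x_g^-$ qualifies precisely when $\Delta_\Delta>\Delta_g$ while $x_g^+$ always qualifies. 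I expect the main obstacle to be exactly this $a>2$ analysis---pinning down the relative order of $x_g^\pm$ and $x_\Delta^\pm$, untangling the signs, and handling the degenerate boundaries $a=1$ and $a=2$; once the criterion $|c\,x_g+d|>x_g^2$ is in hand, the remainder is careful bookkeeping.
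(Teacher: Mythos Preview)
Your plan follows the same BKW framework as the paper and is sound for the main claim under $ad<bc$. The only substantive difference is how regime (b) is tested: you distil the BKW condition to the inequality $|c\,x_g+d|>x_g^{\,2}$ and then to the two-factor sign condition, whereas the paper checks (C-i)/(C-ii) at $x_g^\pm$ directly by computing the signs of $A$, $\Delta$, $h$ and hence $\alpha_\pm$ and $f$ there, leaning on the ordering of $x_g^\pm,x_\Delta^\pm,x_A$ already established in \cref{++++:lem:basic:xA<xB}. Your criterion is more self-contained; the paper's route recycles geometric information needed anyway for the interlacing theorem. Both arrive at the same trichotomy, and the paper also isolates the $a>2$ subcase via the equivalence $h(x_g^-)>0\iff\Delta_\Delta>\Delta_g$, which is exactly the comparison you anticipate.

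There is one genuine gap. The first sentence of \cref{thm:lz} asserts that $-b/a$ and $x_\Delta^\pm$ are non-isolated limits for \emph{all} $a,b,c,d>0$, not just $ad\le bc$; this unrestricted version is precisely what the paper uses in \cref{sec:necessity} to prove the necessity half of \cref{thm:cri:RR}. You invoke real-rootedness at the outset and so cover only $ad\le bc$. Relatedly, your claim ``equal modulus forces $\lambda_\pm$ to be a conjugate pair, i.e.\ $D(z_0)\le0$'' overlooks the possibility $A(z_0)=0$ with $D(z_0)>0$, which also yields $|\lambda_+|=|\lambda_-|$; this is exactly what occurs at $z_0=-b/a$ when $ad>bc$. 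The fix is short: note directly that $f(-b/a)=0$ because $A(-b/a)=0$, and $f(x_\Delta^\pm)=0$ because $\Delta(x_\Delta^\pm)=0$, so Condition (C-iii) holds at these points regardless of the sign of $ad-bc$; then quote the general fact (from \cite{BKW78}) that (C-iii) points are non-isolated.
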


When $ad>bc$, even the polynomial $W_2(z)=az^2+(b+c)z+d$ may have no real zeros.
We obtain a sufficient condition for the existence of one real zero for large $n$.

\begin{thm}\label{thm:xA>xB:1R}
Let $\{W_n(z)\}_n$ be the polynomial sequence satisfying \cref{rec11} 
with $W_0(z)=1$ and $W_1(z)=z$, where $a,b,c,d>0$ and $ad>bc$. 
Suppose that at least one of the numbers $(2-a)x_\Delta^\pm-b$ is positive.
Then there exists $N$ such that the polynomial $W_n(z)$ has at least one real zero for all $n>N$.
\end{thm}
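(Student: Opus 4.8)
The plan is to reduce the statement to even $n$ and then exhibit a real zero by the intermediate value theorem, using the value of $W_n$ at a branch point $x_\Delta^{\pm}$ --- a point at which the characteristic equation of \cref{rec11} acquires a double root.

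First I would record the solution of \cref{rec11}. Regarding $z$ as a parameter, the characteristic equation is $\lambda^2-(az+b)\lambda-(cz+d)=0$, with discriminant $\Delta(z)=(az+b)^2+4(cz+d)$; since $x_\Delta^{\pm}$ are exactly the zeros of $\Delta$, at $z=x_\Delta^{\pm}$ there is a double root $\lambda=(ax_\Delta^{\pm}+b)/2$. (The hypothesis that $(2-a)x_\Delta^{\pm}-b$ be positive presupposes $x_\Delta^{\pm}\in\R$, i.e.\ $\Delta_\Delta>0$, which I take for granted.) Because $W_n$ has degree $n$ with positive leading coefficient, every $W_n$ with $n$ odd changes sign between $-\infty$ and $+\infty$ and hence already has a real zero; so it suffices to treat even $n$.

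Next comes the main computation. Writing $P(z)=(2-a)z-b$ and solving \cref{rec11} at the double root $\lambda$ (equivalently, passing to the confluent limit $\lambda_+\to\lambda_-$ in the Binet-type formula for $W_n$ fixed by $W_0=1$, $W_1=z$), one obtains
\[
W_n(x_\Delta^{\pm})=\lambda^{\,n-1}\bigl(\lambda+\tfrac n2\,P(x_\Delta^{\pm})\bigr),\qquad \lambda=\frac{ax_\Delta^{\pm}+b}{2},
\]
where the coefficient $P(x_\Delta^{\pm})=2\bigl(x_\Delta^{\pm}-\lambda\bigr)$ of the linear-in-$n$ term is precisely the quantity in the hypothesis. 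The sign information then comes from $ad>bc$: a one-line check ($\Delta(-b/a)=4(ad-bc)/a>0$, while the axis of symmetry of the upward parabola $\Delta$ lies to the left of $-b/a$) gives $x_\Delta^-<x_\Delta^+<-b/a<0$, so $\lambda<0$ at each branch point. Since $P$ is linear with $P(z)<0$ for every $z<0$ whenever $a\le2$, and $x_\Delta^{\pm}<0$, the hypothesis forces $a>2$; as $P$ is then decreasing, it forces $P(x_\Delta^-)>0$.

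Finally I would conclude by the intermediate value theorem. Put $x_0=x_\Delta^-$, so $P(x_0)>0$ and $\lambda<0$. For even $n>2|\lambda|/P(x_0)$ the bracket $\lambda+\tfrac n2 P(x_0)$ is positive while $\lambda^{\,n-1}<0$, whence $W_n(x_0)<0$; on the other hand $W_n(z)\to+\infty$ as $z\to-\infty$ since $n$ is even. Thus $W_n$ has a real zero in $(-\infty,x_0)$. Taking $N=\lceil 2|\lambda|/P(x_0)\rceil$ settles all even $n>N$, and odd $n$ need no hypothesis, giving the required $N$. The only delicate steps are the confluent evaluation of $W_n$ at $x_\Delta^{\pm}$ and the accompanying sign bookkeeping --- recognising that $P(x_\Delta^{\pm})$ is the linear-in-$n$ coefficient of the double-root solution, and that $ad>bc$ pins down $\lambda<0$; once the branch-point value is in hand, the rest is immediate.
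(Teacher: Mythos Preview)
Your argument is correct, and it takes a genuinely different route from the paper's. The paper never invokes the odd--degree observation; instead it splits into the two cases $h(x_\Delta^+)>0$ and $h(x_\Delta^+)<0<h(x_\Delta^-)$ and, in each, combines the branch--point values $W_n(x_\Delta^\pm)$ with the \emph{fixed--point} values $W_n(x_g^\pm)=(x_g^\pm)^n$ (here $x_g^\pm$ are the zeros of $g(z)=(1-a)z^2-(b+c)z-d$, shown to satisfy $x_g^-\le x_\Delta^-\le x_\Delta^+\le x_g^+<0$). The intermediate value theorem then produces a zero in $(x_g^-,x_\Delta^-)$ and another in $[x_\Delta^+,x_g^+)$ in the first case, and a zero in $(x_\Delta^-,x_\Delta^+)$ in the second, with the same threshold $n^-=-A(x_\Delta^-)/h(x_\Delta^-)$ that your $N=\lceil 2|\lambda|/P(x_0)\rceil$ reproduces. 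Your argument is more economical: by disposing of odd $n$ trivially and using only the sign of $W_n$ at $x_\Delta^-$ against its behaviour at $-\infty$, you avoid both the case split and any appeal to $x_g^\pm$. The paper's approach buys more---it localises the zero in an explicit bounded interval and in fact yields \emph{two} real zeros when $h(x_\Delta^+)>0$ or when additionally $\Delta_g\ge 0$---but for the bare existence statement of the theorem your version suffices and is cleaner.
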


\section{Proof of \cref{thm:cri:RR,thm:lz}}\label[sec]{sec:xA<=xB}
To the end of this paper, we use the notation $\{W_n(z)\}_n$ 
to denote a polynomial sequence satisfying \cref{rec11}
with $W_0(z)=1$ and $W_1(z)=z$, where $a,b,c,d>0$.
We employ the notations
\begin{align*}
\Delta(z)&=A(z)^2+4B(z)
=a^2z^2+(2ab+4c)z+(b^2+4d),\\[3pt]
\alpha_\pm(z)&=\frac{\sqrt{\Delta(z)}\pm h(z)}{2\sqrt{\Delta(z)}},\rmand\\[3pt]
g(z)&=-\alpha_+(z)\alpha_-(z)\Delta(z)
=\frac{h^2(z)-\Delta(z)}{4}=(1-a)z^2-(b+c)z-d,
\end{align*}
where $h(z)=(2-a)z-b$; see \cite{GMTW16-01,GMTW16-10}.
Then the numbers $x_g^\pm$ and $x_\Delta^\pm$ 
are zeros of the polynomials $\Delta(z)$ and $g(z)$ respectively,
and 
\begin{align}
W_n(x_g^\pm)&=(x_g^\pm)^n,\label{W:xg}\\[3pt]
W_n(x_\Delta^\pm)
&=\frac{A(x_\Delta^\pm)+nh(x_\Delta^\pm)}{2}
\cdot\bgg{\frac{A(x_\Delta^\pm)}{2}}^{n-1},\label{W:xd}
\end{align}
For convenience, we denote $x_A=-b/a$ and $x_B=-d/c$.
We use the notation $\sgn(\cdot)$ to denote the sign function for real numbers.
Let
\begin{equation*}
n^\pm=-\frac{A(x_\Delta^\pm)}{h(x_\Delta^\pm)},
\qquad\text{if $h(x_\Delta^\pm)\ne0$.}
\end{equation*}

\begin{lem}\label{++++:lem:basic}
We have
\begin{align*}
\sgn\bg{W_n(x_A)}
&=\begin{cases}
(-1)^{\lceil n/2\rceil},&\text{if $x_A<x_B$},\\
0,&\text{if $x_A=x_B$},\\
(-1)^n,&\text{if $x_A>x_B$}.
\end{cases}
\end{align*}
If $\Delta_g\ge0$, then $W_n(x_g^-)(-1)^n>0$. If $\Delta_\Delta\ge0$, then 
\[
\sgn\bg{W_n(x_\Delta^-)}=\begin{cases}
(-1)^n,&\text{if $h(x_\Delta^-)\le0$ or $n<n^-$};\\[4pt]
0,&\text{if $n=n^-$};\\[4pt]
(-1)^{n+1},&\text{otherwise}.
\end{cases}
\]
Moreover, we have $h(x_\Delta^\pm)<0$ if $a\le2$. 
\end{lem}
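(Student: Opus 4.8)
The four assertions all rest on the two closed forms \cref{W:xg,W:xd}, together with the collapse of \cref{rec11} at $z=x_A$, so the plan is to treat each evaluation point in turn. For the sign of $W_n(x_A)$, I would first note that $A(x_A)=a(-b/a)+b=0$, so \cref{rec11} degenerates into the decoupled recurrence $W_n(x_A)=B(x_A)W_{n-2}(x_A)$. Since $B(x_A)=cx_A+d=(ad-bc)/a$ and $a>0$, the sign of $B(x_A)$ equals $\sgn(ad-bc)$, which is negative, zero, or positive exactly when $x_A<x_B$, $x_A=x_B$, or $x_A>x_B$. Starting from $W_0(x_A)=1>0$ and $W_1(x_A)=-b/a<0$ and iterating along even and odd indices separately then yields the three sign patterns $(-1)^{\lceil n/2\rceil}$, $0$ (for $n\ge2$), and $(-1)^n$ by a one-line induction.

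For the pattern $W_n(x_g^-)(-1)^n>0$, I would invoke \cref{W:xg}, which reduces the claim to $(-x_g^-)^n>0$, that is, to $x_g^-<0$. Since $x_g^-$ is the smaller zero of $g(z)=(1-a)z^2-(b+c)z-d$ (the unique zero when $a=1$), I would read off its sign from Vieta's formulas: the product of the zeros is $d/(a-1)$ and their sum is $(b+c)/(1-a)$. When $a<1$ the product is negative, so the zeros straddle $0$ and $x_g^-<0$; when $a>1$ the product is positive while the sum is negative, forcing both zeros to be negative; when $a=1$ the zero is $-d/(b+c)<0$. In every case $x_g^-<0$.

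For $W_n(x_\Delta^-)$, I would start from \cref{W:xd} and compute $A(x_\Delta^-)=-2(c+\sqrt{\Delta_\Delta})/a<0$, so the factor $(A(x_\Delta^-)/2)^{n-1}$ carries sign $(-1)^{n-1}$. It then remains to track the sign of the linear-in-$n$ factor $A(x_\Delta^-)+nh(x_\Delta^-)$. If $h(x_\Delta^-)\le0$ this factor is at most $A(x_\Delta^-)<0$ for every $n\ge0$ (it equals $A(x_\Delta^-)$ at $n=0$ and does not increase), giving overall sign $(-1)^n$; if $h(x_\Delta^-)>0$ it is increasing, vanishes exactly at $n=n^-=-A(x_\Delta^-)/h(x_\Delta^-)>0$, and is negative below and positive above that threshold, producing $(-1)^n$, $0$, and $(-1)^{n+1}$ respectively. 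Multiplying the two sign contributions reproduces the stated case distinction.

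Finally, for $h(x_\Delta^\pm)<0$ when $a\le2$, the case $a=2$ is immediate since then $h(z)\equiv-b<0$. For $a<2$ the difficulty is that substituting $x_\Delta^+$ leaves a radical term $(2-a)\sqrt{\Delta_\Delta}$ with the ``wrong'' (positive) sign, so a direct estimate is awkward; this is the step I expect to be the main obstacle. I would sidestep it by applying Vieta's formulas to the zeros $x_\Delta^\pm$ of $\Delta(z)$: a short computation gives $h(x_\Delta^+)+h(x_\Delta^-)$ with numerator $-(2-a)(2ab+4c)-2a^2b$ and $h(x_\Delta^+)h(x_\Delta^-)$ with numerator $(2-a)^2(b^2+4d)+b(2-a)(2ab+4c)+a^2b^2$. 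For $a\le2$ the first numerator is manifestly negative, while the second is a sum of nonnegative terms including $a^2b^2>0$ and hence positive; a negative sum together with a positive product forces both $h(x_\Delta^\pm)$ to be negative.
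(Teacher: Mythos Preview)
Your argument is correct and, for the first three assertions, matches the paper's proof exactly (the paper merely cites \cref{rec11}, \cref{W:xg}, and \cref{W:xd} without writing out the case analyses, so your fuller account is a faithful expansion).

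The only genuine difference is in the last claim $h(x_\Delta^\pm)<0$ for $a\le2$. You treat the direct substitution as ``awkward'' and instead compute the sum and product of $h(x_\Delta^+)$ and $h(x_\Delta^-)$ via Vieta's formulas applied to the roots of~$\Delta$. This works, but the paper's route is much shorter: it simply notes that $x_\Delta^-\le x_\Delta^+<0$ (immediate since $\Delta(z)=a^2z^2+(2ab+4c)z+(b^2+4d)$ has all positive coefficients), whence for $a\le2$ one has $(2-a)x_\Delta^\pm\le0$ and $h(x_\Delta^\pm)=(2-a)x_\Delta^\pm-b\le -b<0$. So the ``main obstacle'' you anticipated is not an obstacle at all; the radical never needs to be isolated. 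Your Vieta-on-$h$ trick gives the same conclusion and would generalize to situations where the sign of $x_\Delta^\pm$ is not known, but here it is unnecessary overhead.
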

\begin{proof}
The sign of $W_n(x_A)$ can be shown directly from \cref{rec11},
that of $W_n(x_g^-)$ by \cref{W:xg},
and that of $W_n(x_\Delta^-)$ by \cref{W:xd}. 
Note that $x_\Delta^-\le x_\Delta^+<0$. 
Hence $h(x_\Delta^\pm)=(2-a)x_\Delta^\pm-b<0$ if $a\le2$.
\end{proof}
 
In \cref{sec:sufficiency}, we show the sufficiency part of \cref{thm:cri:RR}.
\Cref{thm:lz} is  established in \cref{sec:lz},
and used in proving the necessity part of \cref{thm:cri:RR} in \cref{sec:necessity}.

\subsection{The sufficiency part of \cref{thm:cri:RR}.}\label[ssec]{sec:sufficiency}
For convenience, we denote
\[
u=\begin{cases}
x_\Delta^-,&\text{if $a\le 2$},\\[4pt]
x_g^-,&\text{if $a>2$},
\end{cases}
\rmand
v=\begin{cases}
x_g^-,&\text{if $a\le 1$},\\[4pt]
x_g^+,&\text{if $a>1$}. 
\end{cases}
\]

\begin{lem}\label{++++:lem:basic:xA<xB}
Suppose that $x_A\le x_B$.
Then $W_n(x_\Delta^+)<0$,
$W_n(u)(-1)^n>0$ and $W_n(v)(-1)^n>0$.
When $a>1$, we have $x_g^-\le x_\Delta^-$, with the equality holds if and only if $\Delta_\Delta=\Delta_g$.
Moreover, we have the following. 
\begin{itemize}
\item
If $x_A<x_B$, then $u\le x_\Delta^-<x_A<x_\Delta^+<v<0$.
\item
If $x_A=x_B$, then $u<x_A=x_\Delta^+<v<0$. 
\end{itemize}
\end{lem}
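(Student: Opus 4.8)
The plan is to deduce every assertion from the two identities $A(z)+h(z)=2z$ and $\Delta(z)=h^2(z)-4g(z)$ together with the shapes of the two quadratics. First I would record that $\Delta$ opens upward (leading coefficient $a^2$) with both roots negative, so $x_\Delta^-<x_\Delta^+<0$, whereas $g$ opens upward for $a<1$, is linear for $a=1$, and opens downward for $a>1$. Since $x_A\le x_B$ is the same as $ad\le bc$, I may quote from the preamble that $\Delta_\Delta,\Delta_g>0$ and that $x_\Delta^-<x_A<x_\Delta^+$; the one extra fact needed is $A(x_A)=ax_A+b=0$, so $\Delta(x_A)=4(cx_A+d)=4(ad-bc)/a$, negative when $ad<bc$ and zero when $ad=bc$. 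In the degenerate case $ad=bc$ one finds $\Delta_\Delta=c^2$, hence $x_\Delta^+=-b/a=x_A$, which is exactly the coincidence recorded in the second bullet.

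Next I would evaluate each quadratic at the roots of the other. From $\Delta=h^2-4g$ I get $g(x_\Delta^\pm)=h^2(x_\Delta^\pm)/4\ge0$ and $\Delta(x_g^\pm)=h^2(x_g^\pm)\ge0$. For $a>1$, where $g$ is nonnegative precisely on $[x_g^-,x_g^+]$, the bound $g(x_\Delta^\pm)\ge0$ forces the containment $[x_\Delta^-,x_\Delta^+]\subseteq[x_g^-,x_g^+]$, giving at once $x_g^-\le x_\Delta^-$ and $x_\Delta^+\le x_g^+$. To get strictness I compute $A(x_\Delta^+)=2(\sqrt{\Delta_\Delta}-c)/a\ge0$ (using $\Delta_\Delta-c^2=a(bc-ad)\ge0$), whence $h(x_\Delta^+)=2x_\Delta^+-A(x_\Delta^+)<0$ because $x_\Delta^+<0$; thus $g(x_\Delta^+)>0$, which upgrades the bound to the strict $x_\Delta^+<x_g^+$ for $a>1$. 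The same $h(x_\Delta^+)<0$ settles $a\le1$, where $x_g^+>0>x_\Delta^+$ (or $g$ is linear) leaves $x_\Delta^+<x_g^-$ as the only option compatible with $g(x_\Delta^+)>0$. Finally, Vieta on $g$ (product $-d/(1-a)$, sum $(b+c)/(1-a)$) shows $x_g^-<0$ always and $x_g^+<0$ when $a>1$, so $v<0$ in every case, and $u\le x_\Delta^-$ (equality for $a\le2$ by definition of $u$, and $u=x_g^-\le x_\Delta^-$ by the containment for $a>2$). This assembles both displayed chains, the degenerate one following from $x_\Delta^+=x_A$ together with $x_\Delta^-<x_\Delta^+$.

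The sign evaluations of $W_n$ then drop out mechanically. For $W_n(x_\Delta^+)<0$ in the generic case $x_A<x_B$, where $A(x_\Delta^+)>0$, I feed $A(x_\Delta^+)>0$ and $h(x_\Delta^+)<0$ into \cref{W:xd}: the factor $(A(x_\Delta^+)/2)^{n-1}$ is positive, while $A(x_\Delta^+)+nh(x_\Delta^+)=2x_\Delta^++(n-1)h(x_\Delta^+)<0$ for $n\ge1$ (in the remaining case $x_A=x_B$ one has $x_\Delta^+=x_A$, where \cref{++++:lem:basic} pins down the sign directly). For $W_n(u)(-1)^n>0$ I invoke \cref{++++:lem:basic}: if $a\le2$ then $u=x_\Delta^-$ and $h(x_\Delta^-)<0$ gives $\sgn W_n(x_\Delta^-)=(-1)^n$, while if $a>2$ then $u=x_g^-$ and $\Delta_g>0$ gives $W_n(x_g^-)(-1)^n>0$. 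For $W_n(v)(-1)^n>0$ I again cite \cref{++++:lem:basic} when $v=x_g^-$, and when $v=x_g^+$ (so $a>1$) I use \cref{W:xg}: since $x_g^+<0$, we have $(x_g^+)^n(-1)^n=|x_g^+|^n>0$.

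I expect the main obstacle to be the equality clause, that $x_g^-\le x_\Delta^-$ becomes equality exactly when $\Delta_\Delta=\Delta_g$. Under the containment $x_g^-\le x_\Delta^-\le x_\Delta^+\le x_g^+$, equality $x_g^-=x_\Delta^-$ is equivalent to $\Delta(x_g^-)=0$ (since $\Delta_\Delta>0$ forbids $x_g^-=x_\Delta^+$), hence to $h(x_g^-)=0$ via $\Delta(x_g^-)=h^2(x_g^-)$, hence to $g$ vanishing at the root $b/(2-a)$ of $h$ (for $a\ne2$). The one computation I would carry out in full is the clean identity $(2-a)^2\,g\bigl(b/(2-a)\bigr)=\Delta_\Delta-\Delta_g$, obtained by substituting $z=b/(2-a)$ into $g$ and clearing the denominator; it collapses $g(b/(2-a))=0$ exactly to $\Delta_\Delta=\Delta_g$. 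The boundary value $a=2$ is consistent, since there $h$ has no root and $\Delta_\Delta-\Delta_g=-b^2<0$, so equality cannot occur, matching the strict $x_g^-<x_\Delta^-$ that the containment already forces.
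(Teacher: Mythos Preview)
Your proposal is correct and follows essentially the same strategy as the paper: both arguments hinge on the identity $g(x_\Delta^\pm)=h^2(x_\Delta^\pm)/4$, a case split on the shape of $g$ according to $a\lessgtr 1$, the sign of $h(x_\Delta^+)$, and then formulas \eqref{W:xd}--\eqref{W:xg} (together with \cref{++++:lem:basic}) for the signs of $W_n$. Two cosmetic differences are worth noting. First, you obtain $h(x_\Delta^+)<0$ uniformly by computing $A(x_\Delta^+)=2(\sqrt{\Delta_\Delta}-c)/a\ge 0$ and using $h(z)=2z-A(z)$, whereas the paper splits into $a\le 2$ and $a>2$ (the latter via $h(x_\Delta^+)\le h(x_A)=2x_A$); your version is slightly cleaner. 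Second, for the equality clause the paper evaluates $h(x_\Delta^-)$ directly and reads off the factor $\Delta_g-\Delta_\Delta$ in the numerator, while you pass through the dual identity $(2-a)^2 g\bigl(b/(2-a)\bigr)=\Delta_\Delta-\Delta_g$; both work. One small point to tighten in your write-up: the backward implication $g\bigl(b/(2-a)\bigr)=0\Rightarrow h(x_g^-)=0$ a priori only gives $b/(2-a)\in\{x_g^-,x_g^+\}$, so you should invoke your already established strict inequality $x_\Delta^+<x_g^+$ (hence $x_g^+\ne x_\Delta^+$, and $b/(2-a)$, being a common root of $g$ and $\Delta$, must equal $x_g^-=x_\Delta^-$). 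Finally, observe that in the degenerate case $x_A=x_B$ your appeal to \cref{++++:lem:basic} gives $W_n(x_\Delta^+)=W_n(x_A)=0$ for $n\ge 2$, not a strict negative; this is a wrinkle in the lemma's statement itself rather than in your argument.
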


\begin{proof}
It is clear that 
\[
x_A\le x_B\iff ad\le bc.
\]
Suppose that $x_A\le x_B$. It is direct to check that $\Delta_\Delta,\Delta_g\ge0$, 
which implies $x_\Delta^\pm,x_g^\pm\in\R$. From definition, one may verify $x_\Delta^-< x_A\le x_\Delta^+<0$.
When $x_A<x_B$, we have $\Delta(x_A)=4B(x_A)<0$, which implies 
$x_A<x_\Delta^+$.
When $x_A=x_B$, we have $x_\Delta^+=x_A$ from definition. 
It follows that $A(x_\Delta^-)\le 0\le A(x_\Delta^+)$, and that 
\begin{align}
h(x_\Delta^\pm)=(2-a)x_\Delta^\pm-b\le-b<0,&\qquad\text{if $a\le 2$},
\label[ineq]{pf:h:xd:a<=2}\\
h(x_\Delta^+)=(2-a)x_\Delta^+-b\le (2-a)x_A-b=2x_A<0,&\qquad\text{if $a>2$}.
\label[ineq]{pf:h:xd+:a>2}
\end{align}
Therefore, we have
\begin{align*}
&A(x_\Delta^\pm)+nh(x_\Delta^\pm)\le A(x_\Delta^\pm)+h(x_\Delta^\pm)=2x_\Delta^\pm<0,\qquad\text{if $a\le 2$},\rmand\\[3pt]
&A(x_\Delta^+)+nh(x_\Delta^+)\le A(x_\Delta^+)+h(x_\Delta^+)=2x_\Delta^+<0,\qquad\text{if $a>2$}.
\end{align*}
By \cref{W:xd}, we find $W_n(x_\Delta^+)<0$, and 
\[
W_n(x_\Delta^-)(-1)^n>0\qquad\text{if $a\le 2$}.
\]
On the other hand, from definition, one may verify that 
\begin{align*}
x_g^-<0<x_g^+,&\qquad\text{if $a<1$}\rmand\\
x_g^-\le x_g^+<0,&\qquad\text{if $a\ge1$}.
\end{align*}
By \cref{W:xg} and the definition of $v$, we find $W_n(v)(-1)^n>0$.

From definition, we have
$g(x_\Delta^\pm)=h^2(x_\Delta^\pm)/4$.
By using \cref{pf:h:xd:a<=2,pf:h:xd+:a>2}, we find 
\begin{itemize}
\item
if $a\le 2$, then $g(x_\Delta^\pm)>0$; and
\item
if $a>2$, then $g(x_\Delta^+)>0$ and $g(x_\Delta^-)\ge0$.
\end{itemize}
%\begin{align*}
%g(x_\Delta^\pm)>0,&\qquad\text{if $a\le2$};\\[3pt]
%g(x_\Delta^+)>0,&\qquad\text{if $a>2$};\\[3pt]
%g(x_\Delta^-)\ge0,&\qquad\text{if $a>2$}.
%\end{align*}
When $a>1$, the function $g(z)$ is a downward parabola.
Then the points~$x_\Delta^\pm$ lie inside the closed interval $[x_g^-,\,x_g^+]$. 
When $a<1$, $g(z)$ is an upward parabola and
$x_\Delta^\pm$ lie outside the open interval $(x_g^-,\,x_g^+)$. 
Since $x_\Delta^+<0<x_g^+$,
both~$x_\Delta^\pm$ lie to the left of the interval. 
When $a=1$,
the function $g(z)$ is a decreasing line and $x_\Delta^+\le x_g^\pm$.
In summary, we have
\begin{align*}
x_\Delta^-< x_\Delta^+<x_g^-<0<x_g^+,&\qquad\text{if $a<1$},\\
x_\Delta^-<x_\Delta^+<x_g^-=x_g^+<0,&\qquad\text{if $a=1$},\\
x_g^-\le x_\Delta^-< x_\Delta^+<x_g^+<0,&\qquad\text{if $a>1$}.
\end{align*}
The equality $x_g^-=x_\Delta^-$ holds if and only if $g(x_\Delta^-)=0$, that is,
$h(x_\Delta^-)=0$. It is routine to check that
\[
h(x_\Delta^-)=\frac{2(\Delta_g-\Delta_\Delta)}{-ab+ac-2c-(a-2)\sqrt{\Delta_\Delta}}.
\]
Hence $x_g^-=x_\Delta^-$ if and only if $\Delta_\Delta=\Delta_g$. 
All the other desired inequalities are verified in the above proof. 
This completes the proof.
\end{proof}

We remark that $\Delta_\Delta=\Delta_g$ implies that $a>2$.

Let $X,Y\subset\R$ such that $|X|-|Y|\in\{0,1\}$.
We say that {\em $X$ interlaces $Y$ from the left}, 
if the elements $x_i$ of $X$ and the elements $y_j$ of $Y$ can be arranged so that 
$x_1\le y_1\le x_2\le y_2\le\cdots$,
and that {\em $X$ strictly interlaces $Y$ from the left} 
if no equality holds in the ordering.
We say {\em $X$ interlaces $Y$ from the right}
if the elements can be arranged as
$x_1\ge y_1\ge x_2\ge y_2\ge\cdots$,
and {\em $X$ strictly interlaces~$Y$ from the right} if $x_1>y_1>x_2>y_2>\cdots$.
For any set $S$ and any interval $J$, we denote $S^J=S\cap J$ for short notation.
 
It is clear that the leading coefficient of $W_n(z)$ is $a^{n-1}z^n$,
and the polynomials $W_n(z)$ satisfy another recurrence 
\begin{equation}\label[rec]{rec2}
W_n(z)=\bg{A^2(z)+2B(z)}W_{n-2}(z)-B^2(z)W_{n-4}(z),\qquad\text{for $n\ge4$}.
\end{equation}
Using the idea of piecewise interlacing zeros introduced in \cite{WZ17X}, 
one may derive the root distribution of the polynomials $W_n(z)$ when $x_A<x_B$;
see \cref{++++:thm:xA<xB}, and \cref{fig1:++++:thm:xA<xB,fig2:++++:thm:xA<xB}
for illustration.

\begin{thm}\label{++++:thm:xA<xB}
Suppose that $x_A<x_B$. Then every polynomial $W_n(z)$ is real-rooted. 
Denote by $R_n$ the zero set of~$W_n(z)$.
Let
\[
J_1=(u,\,x_A),\quad
J_2=(x_A,\,x_\Delta^+),\quad
J_3=(x_\Delta^+,\,v),\quad
J_4=(v,\,0]. 
\]
Then $|R_n^{J_1}|=\fl{n/2}$, $|R_n^{J_2}|=\fl{(n-1)/2}$, 
and $|R_{2n}^{J_3}|=|R_{2n-1}^{J_4}|=1$.
Moreover, for any $k=1,2$ and any $j=1,2$,
the set $R_n^{J_j}$ strictly interlaces $R_{n-k}^{J_j}$, from the left if $n$ is odd,
and from the right if $n$ is even. 
\end{thm}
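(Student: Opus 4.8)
The plan is to prove all the assertions together by induction on $n$, using the fact that the claimed counts already add up to the degree. For $n=2m$ they are $m+(m-1)+1=n$, and for $n=2m-1$ they are $(m-1)+(m-1)+1=n$. Since $W_n$ has positive coefficients (an immediate induction from \cref{rec11}), it is positive on $[0,\infty)$, so all real zeros lie in $(-\infty,0)$ and $W_n(0)>0$ for $n\ge2$. Consequently it suffices to produce, for each $n$, \emph{at least} $\fl{n/2}$ zeros in $J_1$, at least $\fl{(n-1)/2}$ in $J_2$, and at least one in $J_3$ (resp.\ $J_4$) when $n$ is even (resp.\ odd): any family of lower bounds on the number of real zeros that sums to $\deg W_n$ must be exact, which simultaneously forces real-rootedness, simplicity of every zero, the stated counts, and the exact interlacing.

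The engine is to evaluate $W_n$ at the zeros of its two predecessors through \cref{rec11}: at a zero $\zeta$ of $W_{n-1}$ one has $W_n(\zeta)=(c\zeta+d)W_{n-2}(\zeta)$, and at a zero $\eta$ of $W_{n-2}$ one has $W_n(\eta)=(a\eta+b)W_{n-1}(\eta)$. Two sign facts make these products computable. First, $az+b<0$ exactly on $J_1$ and $az+b>0$ on $J_2\cup J_3\cup J_4$, because $x_A=-b/a$ is the common endpoint of $J_1$ and $J_2$. Second, $cz+d<0$ on all of $J_1\cup J_2$: since $\Delta(x_B)=A(x_B)^2\ge0$ and $x_B>x_A>x_\Delta^-$, the upward parabola $\Delta$ forces $x_B\ge x_\Delta^+$, so $J_1\cup J_2$ lies strictly to the left of the unique zero $x_B$ of $cz+d$. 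Hence for $\zeta\in R_{n-1}\cap(J_1\cup J_2)$ we get $\sgn\bg{W_n(\zeta)}=-\sgn\bg{W_{n-2}(\zeta)}$, and for $\eta\in R_{n-2}$ we get $\sgn\bg{W_n(\eta)}=\pm\sgn\bg{W_{n-1}(\eta)}$ with the sign dictated by whether $\eta\in J_1$.

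For the inductive step on $J_1$ and $J_2$, the hypothesis that $R_{n-1}$ strictly interlaces $R_{n-2}$ inside each $J_j$ means $W_{n-2}$ takes opposite signs at consecutive zeros of $W_{n-1}$ there; by the first displayed sign relation $W_n$ therefore also alternates in sign along the zeros of $W_{n-1}$ in $J_j$, producing one zero of $W_n$ strictly between each consecutive pair. This yields the interior zeros and the strict interlacing of $R_n^{J_j}$ with $R_{n-1}^{J_j}$; the zeros near the two ends of $J_j$, and hence the parity-dependent gain of one zero and the left/right orientation, are fixed by the boundary signs $\sgn\bg{W_n(x_A)}=(-1)^{\lceil n/2\rceil}$ and $\sgn\bg{W_n(u)}=(-1)^n$ from \cref{++++:lem:basic}, together with $W_n(x_\Delta^+)<0$ from \cref{++++:lem:basic:xA<xB}. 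The interlacing with $W_{n-2}$ (the case $k=2$) comes from the second sign relation in the same way. On the short intervals, the endpoint signs $W_n(x_\Delta^+)<0$, $\sgn\bg{W_n(v)}=(-1)^n$, and $W_n(0)>0$ force a sign change, hence at least one zero, in $J_3$ when $n$ is even and in $J_4$ when $n$ is odd.

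The main obstacle is the boundary bookkeeping on $J_1$ and $J_2$: converting the endpoint signs into the precise counts $\fl{n/2}$ and $\fl{(n-1)/2}$ and into the correct orientation (from the left for odd $n$, from the right for even $n$) requires matching the sign of $W_n$ just inside each end of $J_j$ against that of $W_{n-1}$ and keeping the two coupled parities consistent, since $J_1$ gains its extra zero at even $n$ while $J_2$ gains one at odd $n$. I would therefore carry a single strengthened induction hypothesis bundling the counts, the interlacing with both predecessors, and the boundary signs, and check the base cases $n\le3$ directly from $W_1(z)=z$ and $W_2(z)=az^2+(b+c)z+d$, whose zeros are located by the inequalities $u\le x_\Delta^-<x_A<x_\Delta^+<v<0$ of \cref{++++:lem:basic:xA<xB}.
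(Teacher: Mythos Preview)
Your proposal is correct and the induction scheme you outline will go through. The route, however, is genuinely different from the paper's. The paper does not work with \cref{rec11} directly in the inductive step; instead it passes to the order-four recurrence
\[
W_n(z)=\bigl(A^2(z)+2B(z)\bigr)W_{n-2}(z)-B^2(z)W_{n-4}(z),
\]
so that at a zero $x_j$ of $W_{n-2}$ one has $W_n(x_j)=-B^2(x_j)\,W_{n-4}(x_j)$. The coefficient $-B^2$ is automatically nonpositive, which is why the paper never needs to locate $x_B$ relative to $x_\Delta^+$; by contrast, your argument hinges on the observation $x_B>x_\Delta^+$ (equivalently $B(z)<0$ on $J_1\cup J_2$), which you establish cleanly from $\Delta(x_B)=A(x_B)^2>0$. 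The trade-off is this: the paper's device makes the sign analysis uniform but forces the base cases out to $n\le4$ and requires carrying the $k=2$ interlacing between $W_{n-2}$ and $W_{n-4}$, while your use of \cref{rec11} keeps the induction step closer to the original recurrence, needs only $n\le3$ as base, and yields the $k=1$ and $k=2$ interlacings in one pass, at the cost of the extra geometric input about $x_B$. Both arguments rely on the same boundary signs from \cref{++++:lem:basic,++++:lem:basic:xA<xB} and on the degree-counting principle you state at the outset.
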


\begin{figure}[htbp]
\begin{tikzpicture}[scale=1.2]
\draw[->] (0,0) -- (10,0) coordinate (x axis);
\foreach \x in {1,3,...,9} 
 \draw[very thick] (\x cm,2pt) -- (\x cm,-2pt);
\draw[very thick] (1,0) -- node[below=5pt] {$J_1$} (3,0);
\draw[very thick] (3,0) -- node[below=5pt] {$J_2$} (5,0);
\draw(5,0) -- node[below=5pt] {$J_3$} (7,0);
\draw(7,0) -- node[below=5pt] {$J_4$} (9,0);

\draw (1,0) node[below=7pt] {$u$};
\draw (3,0) node[below=7pt] {$x_A$};
\draw (5,0) node[below=3pt] {$x_\Delta^+$};
\draw (7,0) node[below=7pt] {$v$};
\draw (9,0) node[below=7pt] {$0$};

\draw (1,0) -- node[above=5pt] {$\bfl{\frac{n}{2}}$} (3,0);
\draw (3,0) -- node[above=5pt] {$\bfl{\frac{n-1}{2}}$} (5,0);
\draw (7,0) -- node[above left = 5pt] {$1$} (9,0);
\end{tikzpicture}
\caption{Illustration of the root distribution of the polynomials $W_{2n-1}(z)$ 
in \cref{++++:thm:xA<xB}.}\label{fig1:++++:thm:xA<xB}
\end{figure}
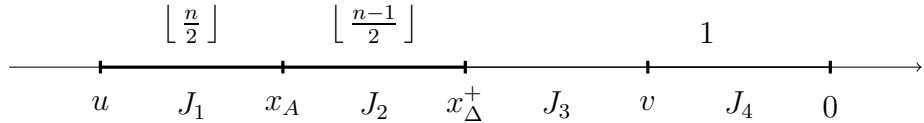

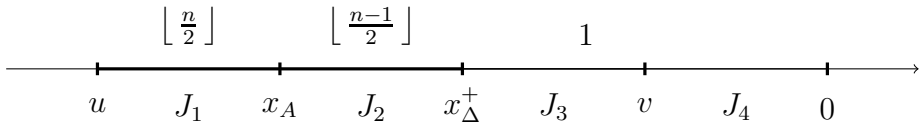
\begin{figure}[htbp]
\begin{tikzpicture}[scale=1.2]
\draw[->] (0,0) -- (10,0) coordinate (x axis);
\foreach \x in {1,3,...,9} 
 \draw[very thick] (\x cm,2pt) -- (\x cm,-2pt);
\draw[very thick] (1,0) -- node[below=5pt] {$J_1$} (3,0);
\draw[very thick] (3,0) -- node[below=5pt] {$J_2$} (5,0);
\draw(5,0) -- node[below=5pt] {$J_3$} (7,0);
\draw(7,0) -- node[below=5pt] {$J_4$} (9,0);

\draw (1,0) node[below=7pt] {$u$};
\draw (3,0) node[below=7pt] {$x_A$};
\draw (5,0) node[below=3pt] {$x_\Delta^+$};
\draw (7,0) node[below=7pt] {$v$};
\draw (9,0) node[below=7pt] {$0$};

\draw (1,0) -- node[above=5pt] {$\bfl{\frac{n}{2}}$} (3,0);
\draw (3,0) -- node[above=5pt] {$\bfl{\frac{n-1}{2}}$} (5,0);
\draw (5,0) -- node[above right = 5pt] {$1$} (7,0);
\end{tikzpicture}
\caption{Illustration of the root distribution of the polynomials $W_{2n}(z)$ 
in \cref{++++:thm:xA<xB}.}\label{fig2:++++:thm:xA<xB}
\end{figure}

\begin{proof}
Suppose that $x_A<x_B$. 
By \cref{++++:lem:basic:xA<xB}, we have 
\begin{equation}\label[ineq]{++++:W:xd1:xA<=xB}
W_n(u)(-1)^n>0.
\end{equation}
By the intermediate value theorem and with the aids of 
\cref{++++:W:xd1:xA<=xB,++++:lem:basic},
it is routine to check that the polynomial $W_n(z)$ ($n=2,3,4$)
has zeros $\xi_{n,1}<\cdots<\xi_{n,n}$ such that 
\[
u<
\xi_{4,1}<\xi_{3,1}<\xi_{2,1}<\xi_{4,2}<x_A
<\xi_{3,2}<\xi_{4,3}<x_\Delta^+
<\xi_{2,2}<\xi_{4,4}<v<\xi_{3,3}<0,
\]
agreeing with all the desired results for $n\le4$. 
Let $n\ge5$. We proceed by induction.
Denote by $x_1$, $\ldots$, $x_{n-2}$ the zeros of $W_{n-2}(z)$ in increasing order. 
Then the interlacing properties by induction imply that
\[
\sgn\bg{W_{n-4}(x_j)}=\begin{cases}
(-1)^{n+j+1},&\text{if $x_j\in J_1$, or equivalently, $1\le j\in\fl{n/2}-1$};\\
(-1)^{n+j},&\text{if $x_j\in J_2$, or equivalently, $\fl{n/2}\le j\le n-3$};\\
(-1)^n,&\text{if $x_j\in J_3\cup J_4$, or equivalently, $j=n-2$}.
\end{cases}
\]
Note that $B(x_j)\ne0$.
By \cref{rec2} and the fact $W_{n-2}(x_j)=0$, 
we infer that
\[
\sgn\bg{W_n(x_j)}
=\sgn\bg{-B^2(x_j)W_{n-4}(x_j)}
=\begin{cases}
(-1)^{n+j},&\text{if $x_j\in J_1$};\\
(-1)^{n+j+1},&\text{if $x_j\in J_2$};\\
(-1)^{n+1},&\text{if $x_j\in J_3\cup J_4$}.
\end{cases}
\]
By the intermediate value theorem,
together with \cref{++++:W:xd1:xA<=xB,++++:lem:basic},
we infer that the polynomial $W_n(z)$ has zeros $z_j$ ($j\in[n]$) such that
\begin{multline*}
u
<z_1<x_1<z_2<x_2
<\cdots
<x_{\fl{n/2}-1}<z_{\fl{n/2}}<x_A\\
<z_{\fl{n/2}+1}<x_{\fl{n/2}}
<z_{\fl{n/2}+2}<x_{\fl{n/2}+1}
<\cdots<x_{n-3}<z_{n-1}<x_\Delta^+,
\end{multline*}
and 
\begin{align*}
x_\Delta^+<x_{n-2}<z_n<v,&\qquad\text{if $n$ is even},\\[4pt]
v<z_n<x_{n-2}<0,&\qquad\text{if $n$ is odd}.
\end{align*}
Since $\deg W_n(z)=n$,
we conclude that the zeros $z_j$ ($j\in[n]$)
constitute the zero set of $W_n(z)$.
This proves the real-rootedness of $W_n(z)$,
verifies the cardinalities~$|R_n^{J_j}|$,
and establishes the desired interlacing properties for $k=2$. 
In the same way, one may show those for $k=1$. 
This completes the proof. 
\end{proof}
 
The idea of piecewise interlacing also works when $x_A=x_B$,
yet some more efforts on handling the repeated zero $x_A$ are needed;
see \cref{fig:thm:++++:xA=xB} for illustration.

\begin{thm}\label{thm:++++:xA=xB}
Suppose that $x_A=x_B$.
Then the function $W_n(z)/A^{\fl{n/2}}(z)$ is a polynomial.
Denote its zero set by $S_n$.
Then 
\[
|S_{2n}^{(u,v)}|=n, \quad |S_{2n-1}^{(u,v)}|=n-1 \rmand |S_{2n-1}^{(v,0]}|=1.
\]
Moreover, for $k=1,2$, the set $S_n^{(u,v)}$
strictly interlaces $S_{n-k}^{(u,v)}$
from the left if $n$ is odd,
and from the right if~$n$ is even. 
\end{thm}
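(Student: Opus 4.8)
The plan is to peel off the repeated factor at $x_A$ and then replay the induction of \cref{++++:thm:xA<xB} on the resulting deflated polynomials. Since $x_A=x_B$ is equivalent to $ad=bc$, both $A$ and $B$ vanish at $x_A$, so $A(z)=a(z-x_A)$ and $B(z)=c(z-x_A)=(c/a)A(z)$; in particular the ratio $B/A=c/a$ is a positive constant. Setting $P_n(z)=W_n(z)/A^{\fl{n/2}}(z)$, the first task is to verify that $P_n$ really is a polynomial, of degree $\lceil n/2\rceil$.

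To this end I would define $P_n$ by substituting $W_n=A^{\fl{n/2}}P_n$ into \cref{rec11}; because $B=(c/a)A$, the power $A^{\fl{n/2}}$ cancels and one is left with
\[
P_{2m}=P_{2m-1}+\tfrac{c}{a}\,P_{2m-2}
\rmand
P_{2m+1}=A\,P_{2m}+\tfrac{c}{a}\,P_{2m-1},
\]
together with $P_0=1$ and $P_1=z$. Read as definitions, these recurrences produce polynomials for which $A^{\fl{n/2}}P_n=W_n$ (a one-line induction), which proves that $P_n$ is a polynomial and gives $\deg P_{2m}=m$ and $\deg P_{2m-1}=m$, matching the total counts ($m$ zeros for $P_{2m}$, and $(m-1)+1$ for $P_{2m-1}$) demanded by the statement. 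The same substitution applied to \cref{rec2} yields the same-parity recurrence
\[
P_n=\bgg{A+\frac{2c}{a}}P_{n-2}-\bgg{\frac{c}{a}}^2P_{n-4},\qquad n\ge4.
\]

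Next I would transfer the sign information. By \cref{++++:lem:basic:xA<xB} we have $u<x_A<v<0$ with $W_n(u)(-1)^n>0$ and $W_n(v)(-1)^n>0$, and an induction on \cref{rec11} at $z=0$ gives $W_n(0)>0$ for $n\ge2$. Since $A(u)<0<A(v)$ and $A(0)=b>0$, dividing by $A^{\fl{n/2}}$ converts these into
\[
\sgn\bg{P_n(u)}=(-1)^{\lceil n/2\rceil},\qquad \sgn\bg{P_n(v)}=(-1)^n,\qquad P_n(0)>0.
\]
Thus $u$ is a left anchor lying below every zero, and the sign of $P_n$ across $(v,0]$ forces an odd number of zeros there when $n$ is odd and an even number when $n$ is even, to be refined to $1$ and $0$ by the interlacing. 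The recurrences also give the closed forms $P_{2m+1}(x_A)=(c/a)^m x_A$ and $P_{2m}(x_A)=(c/a)^{m-1}(mx_A+c/a)$, which replace the now-useless datum $\sgn(W_n(x_A))=0$ of \cref{++++:lem:basic}. With the anchors $u,v,0,x_A$ fixed I would then run the induction of \cref{++++:thm:xA<xB} verbatim on $P_n$: for $k=2$ the same-parity recurrence gives $\sgn(P_n(z_j))=-\sgn(P_{n-4}(z_j))$ at each zero $z_j$ of $P_{n-2}$, while for $k=1$ the two two-term recurrences give $\sgn(P_{2m}(z_j))=\sgn(P_{2m-2}(z_j))$ at a zero of $P_{2m-1}$ and $\sgn(P_{2m+1}(z_j))=\sgn(P_{2m-1}(z_j))$ at a zero of $P_{2m}$; feeding these into the intermediate value theorem between consecutive zeros and using the boundary signs reproduces the strict interlacing (from the left for odd $n$, from the right for even $n$) and the stated cardinalities. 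I note in passing that the even subsequence $\{P_{2m}\}$, governed by a linear plus constant-negative recurrence, becomes a rescaled orthogonal polynomial sequence under the affine substitution $x=az+b+2c/a$ and is therefore real-rooted with interlacing zeros for free; only the cross-parity gluing and the placement relative to $u,v,x_A$ need the hands-on work.

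The main obstacle is exactly the one flagged before the statement: the interior point $x_A$. In \cref{++++:thm:xA<xB} the points $x_A$ and $x_\Delta^+$ were distinct and bracketed two separate blocks of zeros; here they have merged, and $x_A$ sits inside $(u,v)$ where it may itself be a zero of $P_n$. Indeed $P_{2m}(x_A)=(c/a)^{m-1}(mx_A+c/a)$ is positive for $m<c/b$, vanishes when $mb=c$, and is negative for $m>c/b$, so as $m$ crosses $c/b$ one zero of the even subsequence migrates across $x_A$, and when $c/b$ is a positive integer the point $x_A$ genuinely belongs to $S_{2m}$. I would resolve this by observing that $P_{2m+1}(x_A)=(c/a)^m x_A<0$ never vanishes, so $x_A$ is never a common zero of two consecutive $P_n$; hence even at the exceptional index the strict interlacing of $S_{2m}^{(u,v)}$ with $S_{2m\pm1}^{(u,v)}$ survives, with no collision at $x_A$. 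Checking that precisely one even-indexed zero crosses $x_A$, with none created or destroyed there, is the delicate bookkeeping, and I would settle the base cases $n\le4$ by hand using the anchors above before launching the induction.
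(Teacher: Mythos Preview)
Your proposal is correct and follows essentially the same route as the paper: deflate by $A^{\fl{n/2}}$, derive the parity-dependent recurrences and the same-parity recurrence $P_n=(A+2c/a)P_{n-2}-(c/a)^2P_{n-4}$, compute the signs at $u$, $v$, $0$, check the base cases $n\le4$ by hand, and run an intermediate-value-theorem induction.

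The one substantive difference is that the paper never introduces $x_A$ as an anchor. Once the factor $A^{\fl{n/2}}$ is stripped off, the only intervals it uses are $J_1'=(u,v)$ and $J_2'=(v,0]$; the three signs $\sgn P_n(u)=(-1)^{\lceil n/2\rceil}$, $\sgn P_n(v)=(-1)^n$, $P_n(0)>0$, together with the alternation $\sgn P_n(z_j)=-\sgn P_{n-4}(z_j)$ at the zeros of $P_{n-2}$, already account for all $\lceil n/2\rceil$ zeros without subdividing $(u,v)$. So the ``main obstacle'' you flag---tracking a zero of the even subsequence as it migrates across $x_A$, handling the exceptional index $m=c/b$, and the attendant bookkeeping---simply does not arise in the paper's argument. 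Your computations of $P_n(x_A)$ are correct but unnecessary for the stated result, and dropping that anchor collapses your plan exactly onto the paper's proof.
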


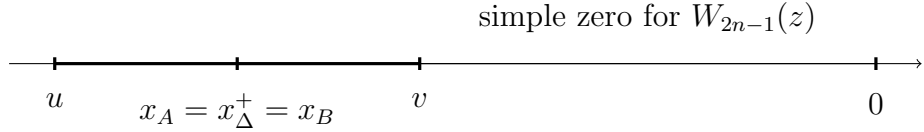
\begin{figure}[htbp]
\begin{tikzpicture}[scale=1.2]
\draw[->] (0,0) -- (10,0) coordinate (x axis);
\foreach \x in {0. 5, 2. 5, 4. 5, 9. 5} 
 \draw[very thick] (\x cm,2pt) -- (\x cm,-2pt);
\draw[very thick] (0. 5, 0) --  (4. 5, 0);

\draw (0.5, 0) node[below=7pt] {$u$};
\draw (2.5, 0) node[below=7pt] {$x_A=x_\Delta^+=x_B$};
\draw (4.5, 0) node[below=7pt] {$v$};
\draw (9. 5, 0) node[below=7pt] {$0$};

\draw (4. 5, 0) -- node[above=7pt] {simple zero for $W_{2n-1}(z)$} (9. 5, 0);
\end{tikzpicture}
\caption{Illustration of the root distribution of the polynomials $W_n(z)$ 
in \cref{thm:++++:xA=xB}.}\label{fig:thm:++++:xA=xB}
\end{figure}

\begin{proof}
Suppose that $x_A=x_B$.
Then $B(x)=c'\cdot A(x)$, where $c'=c/a$. 
Denote
\[
U_n(z)=\frac{W_n(z)}{A^{\fl{n/2}}(z)}. 
\]
In view of \cref{rec11}, the functions $U_n(z)$ 
can be defined alternatively by 
\begin{equation}\label[rec]{++++:rec:U}
U_n(z)=\begin{cases}
\displaystyle \qquad U_{n-1}(x)+c'\cdot U_{n-2}(x),&\quad\text{if $n$ is even},\\[4pt]
\displaystyle A(x)U_{n-1}(x)+c'\cdot U_{n-2}(x),&\quad\text{if $n$ is odd},
\end{cases}
\end{equation}
with $U_0(x)=1$ and $U_1(x)=x$. 
From \cref{++++:rec:U}, 
it is direct to see that every functions $U_n(z)$ is a polynomial of degree $\lceil n/2\rceil$. 
Denoted $d_n=\lceil n/2\rceil$.

By \cref{++++:lem:basic}, we deduce that 
\begin{align}
U_n(u)(-1)^{d_n}
&=\frac{W_n(u)(-1)^n}{(-A(u))^{\fl{n/2}}}(-1)^{d_n+\fl{n/2}+n}
>0,\label[ineq]{++++:sgn:U:u}\\
U_n(v)(-1)^n
&=\frac{W_n(v)(-1)^n}{A^{\fl{n/2}}(v)}>0,\rmand\label[ineq]{++++:sgn:U:v}\\
U_n(0)
&=\frac{W_n(0)}{A^{\fl{n/2}}(0)}>0. \label[ineq]{++++:sgn:U:0}
\end{align}
By \cref{rec2,++++:rec:U}, we infer that
\begin{equation}\label[rec]{++++:rec2:U}
U_n(z)=\tilde{A}(x)U_{n-2}(x)+\tilde{B}(x)U_{n-4}(x).
\end{equation}
where $\tilde{A}(x)=A(x)+2c'$ and $\tilde{B}(x)=-c'^2$.
Now, by the intermediate value theorem 
and using \cref{++++:rec:U,++++:rec2:U,++++:sgn:U:u,++++:sgn:U:v,++++:sgn:U:0},
it is routine to check that the polynomial $U_n(z)$ ($n=2,3,4$)
has zeros $\xi_{n,j}$ ($j\in[d_n]$) such that 
\[
u<\xi_{4,1}<\xi_{3,1}<\xi_{2,1}<\xi_{4,2}<v<\xi_{3,2}<0.
\]

Let $n\ge5$, 
$J_1'=(u,v)$ and $J_2'=(v,0]$. By induction, we can 
suppose that the polynomial $U_{n-2}(x)$ has zeros $x_1<x_2<\cdots<x_{d_{n-2}}$,
where 
\[
x_j\in \begin{cases}
J_2',&\text{if $n$ is odd and $j=d_{n-2}$};\\[5pt]
J_1',&\text{otherwise}.
\end{cases}
\]
Using the same technique in the proof of \cref{++++:thm:xA<xB},
one may show that 
\begin{equation}\label{pf:++++:sgn:U:xj}
\sgn\bg{U_{n}(x_j)}
=\begin{cases}
(-1)^{d_n+j},&\text{if $x_j\in J_1'$},\\[5pt]
1,&\text{if $x_j\in J_2'$},
\end{cases}
\end{equation}
and that the polynomial $U_n(z)$ has zeros $z_1$, $z_2$, $\ldots$, such that
\begin{equation}\label[ineq]{pf:++++:z}
u
<z_1<x_1<z_2<x_2
<\cdots<z_m<x_m
\end{equation}
as long as $x_m\in J_1'$.

If $n$ is even, then one may take $m=d_{n-2}=d_n-1$ in \cref{pf:++++:z}.
In this case,
since $U_n(x_{d_{n-2}})<0<U_n(v)$, 
there exists $z_{d_n}\in(x_{d_{n-2}},\,v)$.
Since $\deg U_n(z)=d_n$,
we conclude that the zeros $z_j$ ($j\in[d_n]$)
constitute the zero set of $U_n(z)$.
This proves all desired results for even $n$ with $k=2$. 
Otherwise $n$ is odd, then one may take $m=d_{n-2}-1=d_n-2$ in \cref{pf:++++:z}.
In this case, we have $x_{d_{n-2}-1}<v<x_{d_{n-2}}$.
By \cref{pf:++++:sgn:U:xj,++++:sgn:U:v}, we have 
\[
U_n(x_{d_{n-2}-1})>0,\qquad
U_n(v)<0,
\rmand
U_n(x_{d_{n-2}})>0.
\]
By the   intermediate value theorem,
the polynomial $U_n(z)$ has zeros $z_{d_n-1}$ and~$z_{d_n}$ such that
$x_{d_{n-2}-1}<z_{d_n-1}<v<z_{d_n}<x_{d_{n-2}}$.
Thus the zeros $z_j$ ($j\in[d_n]$)
constitute the zero set of $U_n(z)$.
This proves all desired results for odd $n$ and $k=2$. 

Denote by $y_1$, $\ldots$, $y_{d_{n-1}}$ 
the zeros of $U_{n-1}(x)$ in increasing order. 
In the same way, one may show that $U_n(z)$ has zeros $\mu_j$ such that
\begin{equation}\label[ineq]{pf:ym}
u<\mu_1<y_1<\mu_2<y_2<\cdots<\mu_m<y_m,
\end{equation}
as long as $y_m\in J_1'$.
If $n$ is odd, then one may take $m=d_{n-1}=d_n$ in \cref{pf:ym}.
It follows that the zero $\mu_j$ coincides with $z_j$ for $j\in[d_n]$.
Otherwise $n$ is even, one may take $m=d_{n-1}-1=d_n-2$ in \cref{pf:ym}.
Since $U_n(y_{d_{n-1}-1})<0<U_n(v)$, 
there exists $\mu_{d_n-1}\in(y_{d_{n-1}-1},\,v)$.
Since $U_n(z)$ has $d_n-1$ zeros in the interval~$J_1'$, we conclude
that the zero~$\mu_j$ coincides with $z_j$ for $j\in[d_n-1]$.
This proves all desired results for $k=1$, and completes the whole proof. 
\end{proof}

From \cref{thm:++++:xA=xB}
we see that every polynomial $W_n(z)$ has the number~$x_A$ 
as a repeated zero of multiplicity $\fl{n/2}$
or $\fl{n/2}+1$.
We remark that
by using the results in \cite{GMTW16-10},
one may find the real-rootedness of every polynomial $U_{2n}(z)$, 
the best bounds, and some convergence results of the union 
of zeros of all polynomials~$U_{2n}(z)$.

\subsection{The proof of \cref{thm:lz}.}\label[ssec]{sec:lz}

Denote by $\mathcal{L}$ the set of limits of zeros of the polynomials $W_n(z)$. 
For convenience, we define 
\[
f(z)=\bigl|A(z)+\sqrt{\Delta(z)}\bigr|-\bigl|A(z)-\sqrt{\Delta(z)}\bigr|.
\]
According to Beraha et al.'s result \cite{BKW78},
under the two non-degeneracy conditions 
\begin{itemize}
\item[(N-i)]
the sequence $\{W_n(z)\}_n$ does not satisfy a recurrence of order less than two,
\item[(N-ii)]
$f(z)\ne0$ for some $z\in\C$,
\end{itemize}
a number $z\in \mathcal{L}$ if and only if one of the following conditions is satisfied:
\begin{itemize}
\item[(C-i)]
$\alpha_-(z)=0$ and $f(z)<0$;
\item[(C-ii)]
$\alpha_+(z)=0$ and $f(z)>0$;
\item[(C-iii)]
$f(z)=0$.
\end{itemize}

In fact, Condition (N-i) is satisfied since otherwise one would have 
$W_n(z)=z^n$ for each $n$,
contradicting the fact $W_2(z)=az^2+(b+c)z+d$.
On the other hand, Condition (N-ii) holds true because
\[
f(0)=|b+\sqrt{b^2+4d}|-|b-\sqrt{b^2+4d}|\ne0.
\]
By using the maximum principle for the characteristic function of \cref{rec11},
one may show that every point satisfy Condition (C-iii) is a non-isolated limit of zeros;
see \cite[page 221]{BKW78}. In particular, 
the points $x_A$ and $x_\Delta^\pm$ are such limits 
since $f(x_A)=f(x_\Delta^\pm)=0$.

Suppose that $ad<bc$, i.e., $x_A<x_B$. 
By \cref{++++:thm:xA<xB}, we infer immediately that 
\[
\L\subseteq[u,0].
\]
Denote $J_\Delta=[x_\Delta^-,\,x_\Delta^+]$.
We claim that 
\[
J_\Delta\subseteq \L\subseteq J_\Delta\cup\{x_g^-,\,x_g^+\}.
\]
For $x\in J_\Delta$, the numbers $A(x)\pm\sqrt{\Delta(x)}$ 
are conjugate to each other.
Thus $f(x)=0$, i.e., every point in the interval $J_\Delta$ satisfies Condition (C-iii).
Hence $J_\Delta\subseteq\L$.
On the other hand,
\cref{++++:thm:xA<xB} tells us that every polynomial $W_n(z)$ has only real zeros.
Thus $\L\subseteq\R$. 
Let $x\in\L\setminus J_\Delta$.
Then either Condition (C-i) or Condition (C-ii) holds true for $z=x$.
Thus $\alpha_-(x)\alpha_+(x)=0$, which implies $g(x)=0$ from definition. 
Hence $x\in\{x_g^\pm\}$. This proves the claim.

In order to show $\L=\L^*$, we proceed according to the domain of $a$.

\begin{case}[$0<a\le 1$]\label{case:a<1}
We shall show that $\L=J_\Delta\cup\{x_g^-\}$.
From \cref{++++:lem:basic:xA<xB}, we see 
that $x_\Delta^-<x_A<x_\Delta^+<x_g^-<0$.
It follows that $A(x_g^-)>0$, $\Delta(x_g^-)>0$ and thus $f(x_g^-)>0$.
From the definition of~$g(z)$, we deduce that $\alpha_-(x_g^-)\alpha_+(x_g^-)=0$.
Since $h(x_g^-)=(2-a)x_g^--b<0$, we have 
\[
\alpha_-(x_g^-)=\frac{\sqrt{\Delta(x_g^-)}-h(x_g^-)}{2\sqrt{\Delta(x_g^-)}}>0.
\]
Hence $\alpha_+(x_g^-)=0$ and $x_g^-\in\L$ by Condition (C-ii).
On the other hand, we have $x_g^+>0$ from definition.
Since $\L\subseteq[u,0]$, we infer that $x_g^+\not\in\L$.
This proves $\L=\L^*$.
\end{case}

\begin{case}[$1<a\le 2$]\label{case:1<a<2}
We shall show that $\L=J_\Delta\cup\{x_g^+\}$.
By \cref{++++:lem:basic:xA<xB}, 
we have $x_g^-\le x_\Delta^-<x_A<x_\Delta^+<x_g^+<0$.
Note that $\L\subseteq[u,0]=[x_\Delta^-,0]$.
If $x_g^-<x_\Delta^-$, then $x_g^-\not\in\L$; 
otherwise $x_g^-=x_\Delta^-\in J_\Delta$.
On the other hand,
same to \cref{case:a<1}, we may derive
$\Delta(x_g^+)>0$, $f(x_g^+)>0$, $h(x_g^+)<0$,
$\alpha_-(x_g^+)>0$, $\alpha_+(x_g^+)=0$, and $x_g^+\in\L$. 
\end{case}

\begin{case}[$a>2$]\label{case:a>2}
We shall show that 
\[
\L=\begin{cases}
J_\Delta\cup\{x_g^+\},&\text{if $\Delta_\Delta\le \Delta_g$};\\[5pt]
J_\Delta\cup\{x_g^\pm\},&\text{if $\Delta_\Delta>\Delta_g$}.
\end{cases}
\]
In this case, 
we have $x_g^-\le x_\Delta^-<x_A<x_\Delta^+<x_g^+$ 
from \cref{++++:lem:basic:xA<xB}. 
It follows that $A(x_g^-)<0<A(x_g^+)$, $\Delta(x_g^\pm)>0$
and thus $f(x_g^-)<0<f(x_g^+)$.

Since the function $h(x)$ is decreasing and $h(x_A)=2x_A<0$, 
we have $h(x_g^+)<0$.
Along the same line as in \cref{case:1<a<2}, one may deduce that $x_g^+\in\L$.

It remains to figure out whether $x_g^-\in\L$.
If $\Delta_\Delta=\Delta_g$, then $x_g^-=x_\Delta^-\in J_\Delta$.
Below we suppose that $\Delta_\Delta\ne\Delta_g$. Then $x_g^-<x_\Delta^-$
by \cref{++++:lem:basic:xA<xB}. 
Denote by $x_h=b/(2-a)$ the zero of the function $h(z)$.
From definition, one may derive that
\[
h(x_g^-)>0 
\iff 
g(x_h)=-\frac{\Delta(x_h)}{4}>0 
\iff 
x_h<x_\Delta^+
\iff 
\Delta_\Delta>\Delta_g.
\]
If $\Delta_\Delta<\Delta_g$,
then $h(x_g^-)\le 0$ by the above equivalence relation, 
and $\alpha_-(x_g^-)>0$.
Hence $x_g^-\not\in\L$ because none of Conditions (i), (ii) and (iii) 
holds for $z=x_g^-$.
Otherwise $\Delta_\Delta>\Delta_g$.
Then $h(x_g^-)>0$ by the equivalence relation, 
and 
\[
\alpha_+(x_g^-)=\frac{\sqrt{\Delta(x_g^-)}+h(x_g^-)}{2\sqrt{\Delta(x_g^-)}}>0.
\]
It follows that $\alpha_-(x_g^-)=0$.
By Condition (i), we infer that $x_g^-\in\L$.
\end{case}
This completes the proof of \cref{thm:lz}.

\subsection{The necessity part of \cref{thm:cri:RR}.}\label[ssec]{sec:necessity}
It suffices to show that $W_n(z)$ has a non-real root for large $n$
whenever $x_A>x_B$.
Suppose that $x_A>x_B$.
Then $\Delta(x_A)=4B(x_A)>0$. 
By continuity, 
there exists $\epsilon>0$ 
such that (i) $\Delta(x)>0$ for any $x\in \mathcal{N}_\epsilon$
and (ii) $x_g^\pm\not\in \mathcal{N}_\epsilon$, where 
\[
\mathcal{N}_\epsilon=\{z\in\R\colon |z-x_A|<\epsilon\}\setminus\{x_A\}.
\]
Consequently, none of Conditions (C-i), (C-ii) and (C-iii) is satisfied by
any point $x\in \mathcal{N}_\epsilon$.
In other words, the neighbourhood $\mathcal{N}_\epsilon$ contains no limit of zeros.
By \cref{thm:lz}, the point $x_A$ is a non-isolated limit of zeros.
Therefore, there is a non-real limit of zeros in any neighbourhood of $x_A$,
which implies the existence of a non-real zero of $W_n(z)$ for every large $n$.
This completes the proof of \cref{thm:cri:RR}.

\section{Proof of \cref{thm:xA>xB:1R}}\label[sec]{sec:xA>xB}
When $x_A>x_B$, the root distribution of the polynomials $W_n(z)$ are totally
different from that when $x_A\le x_B$.
In this section, 
we show the existence of a real zero of $W_n(z)$
for large $n$ under some technical conditions.

\begin{lem}\label{++++:lem:basic:xA>xB}
Suppose that $x_A>x_B$ and $\Delta_\Delta\ge 0$.
Then 
\[
\sgn\bg{W_n(x_\Delta^+)}=\begin{cases}
(-1)^n,&\text{if $h(x_\Delta^+)\le0$ or $n<n^+$},\\[4pt]
0,&\text{if $n=n^+$},\\[4pt]
(-1)^{n+1},&\text{otherwise}.
\end{cases}
\]
Moreover, we have $h(x_\Delta^+)>0$ holds if and only if
\[
\Delta_g>\Delta_\Delta
\rmand
-ab+ac-2c>0.
\]
\end{lem}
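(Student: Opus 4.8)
The plan is to establish the two assertions of \cref{++++:lem:basic:xA>xB} separately, using the same machinery that produced \cref{W:xd} and the sign analysis in \cref{++++:lem:basic}. First I would record the key identity \cref{W:xd}, namely
\[
W_n(x_\Delta^+)=\frac{A(x_\Delta^+)+nh(x_\Delta^+)}{2}\cdot\bgg{\frac{A(x_\Delta^+)}{2}}^{n-1},
\]
and observe that under $x_A>x_B$ the inequality $x_\Delta^-<-b/a<x_\Delta^+$ together with $A(x)=ax+b$ forces $A(x_\Delta^+)>0$. This is the crucial sign fact: since the factor $(A(x_\Delta^+)/2)^{n-1}$ is positive, the sign of $W_n(x_\Delta^+)$ is governed entirely by the linear-in-$n$ quantity $A(x_\Delta^+)+nh(x_\Delta^+)$, and the definition $n^+=-A(x_\Delta^+)/h(x_\Delta^+)$ is exactly the value of $n$ at which that quantity vanishes.

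Next I would split into cases according to the sign of $h(x_\Delta^+)$. If $h(x_\Delta^+)\le0$, then $A(x_\Delta^+)+nh(x_\Delta^+)\le A(x_\Delta^+)+h(x_\Delta^+)=2x_\Delta^+<0$ for all $n\ge1$ (using that the sum telescopes to $2x_\Delta^+$, which is negative because $x_\Delta^+<0$), so the sign of $W_n(x_\Delta^+)$ equals the sign of $(A(x_\Delta^+)/2)^{n-1}$ up to the negative prefactor, giving $(-1)^n$ as claimed. If $h(x_\Delta^+)>0$, then $A(x_\Delta^+)+nh(x_\Delta^+)$ is increasing in $n$ and changes sign precisely at $n=n^+$: it is negative for $n<n^+$, zero at $n=n^+$, and positive for $n>n^+$, which yields the three-way sign split $(-1)^n$, $0$, $(-1)^{n+1}$ respectively. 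This mirrors exactly the treatment of $W_n(x_\Delta^-)$ in \cref{++++:lem:basic}, so the argument is a near-verbatim adaptation.

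For the second assertion, I would characterise when $h(x_\Delta^+)>0$. Since $h(z)=(2-a)z-b$ is a linear function with root $x_h=b/(2-a)$ (meaningful when $a\ne2$), and since $g(x_\Delta^+)=h^2(x_\Delta^+)/4$, the condition $h(x_\Delta^+)>0$ can be tested by comparing $x_\Delta^+$ with $x_h$ and tracking the orientation of $h$. The cleanest route is the chain of equivalences already used in \cref{case:a>2}: the sign of $h(x_\Delta^+)$ is tied, via the location of $x_h$ relative to $x_\Delta^+$ and hence via the sign of $g(x_h)=-\Delta(x_h)/4$, to the comparison $\Delta_g$ versus $\Delta_\Delta$. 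Carrying out this comparison should reduce $h(x_\Delta^+)>0$ to the stated pair of conditions $\Delta_g>\Delta_\Delta$ and $-ab+ac-2c>0$; the second condition encodes the requirement $a>2$ (so that $h$ is decreasing) combined with $x_\Delta^+$ lying to the appropriate side.

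The main obstacle will be the second assertion's explicit algebraic reduction: I expect it to hinge on a closed-form expression for $h(x_\Delta^+)$ analogous to the formula
\[
h(x_\Delta^-)=\frac{2(\Delta_g-\Delta_\Delta)}{-ab+ac-2c-(a-2)\sqrt{\Delta_\Delta}}
\]
derived in the proof of \cref{++++:lem:basic:xA<xB}. The corresponding expression for $h(x_\Delta^+)$ will carry a $+\sqrt{\Delta_\Delta}$ in the denominator, and the delicate point is determining the sign of that denominator, since the numerator $2(\Delta_g-\Delta_\Delta)$ changes sign exactly at $\Delta_g=\Delta_\Delta$. Pinning down when both numerator and denominator are simultaneously positive is what produces the conjunction $\Delta_g>\Delta_\Delta$ and $-ab+ac-2c>0$, and verifying that these two sign conditions are jointly necessary and sufficient is the one genuinely computational step I would need to carry out with care.
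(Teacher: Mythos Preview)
Your first assertion contains a sign error that invalidates the argument as written. You invoke the inequality $x_\Delta^-<-b/a<x_\Delta^+$, but in the paper this is established only under $ad\le bc$ (i.e.\ $x_A\le x_B$). In the present lemma the hypothesis is $x_A>x_B$, and then $\Delta(x_A)=4B(x_A)=4(ad-bc)/a>0$, so $x_A$ lies \emph{outside} the interval $[x_\Delta^-,x_\Delta^+]$; since the vertex of $\Delta$ is at $-(ab+2c)/a^2<x_A$, in fact $x_\Delta^+<x_A$ and hence $A(x_\Delta^+)<0$, not $>0$. Your subsequent sentence is then internally inconsistent: with $A(x_\Delta^+)>0$ the factor $\bigl(A(x_\Delta^+)/2\bigr)^{n-1}$ is positive, so a negative prefactor would give $\sgn W_n(x_\Delta^+)=-1$ for every $n$, not $(-1)^n$. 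The fix is simple---once you use the correct sign $A(x_\Delta^+)<0$, the factor $\bigl(A(x_\Delta^+)/2\bigr)^{n-1}$ carries $(-1)^{n-1}$ and the three-way split comes out exactly as stated---but the proposal as it stands does not prove the claim.

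For the second assertion the paper takes a cleaner route than the one you outline. Rather than rationalising to produce a formula with $\Delta_g-\Delta_\Delta$ in the numerator, it substitutes directly to get
\[
h(x_\Delta^+)=\frac{(a-2)(c-\sqrt{\Delta_\Delta})-ab}{a^2/2},
\]
and then exploits the hypothesis $x_A>x_B$ in the form $\Delta_\Delta=c^2-a(ad-bc)<c^2$, so that $c-\sqrt{\Delta_\Delta}>0$. From this expression one reads off immediately that $h(x_\Delta^+)>0$ forces $a>2$, hence $-ab+ac-2c=(a-2)c-ab>(a-2)\sqrt{\Delta_\Delta}\ge0$; squaring this last inequality yields $(-ab+ac-2c)^2-(a-2)^2\Delta_\Delta=a^2(\Delta_g-\Delta_\Delta)>0$. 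The converse runs the same steps backward. Your proposed approach via the rationalised formula and a sign analysis of numerator and denominator can be made to work, but it is more delicate and you have not identified the crucial input $\Delta_\Delta<c^2$, without which the sign of $c-\sqrt{\Delta_\Delta}$ (equivalently, of your denominator) is not determined.
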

\begin{proof}
The sign of $W_n(x_\Delta^+)$ can be determined by \cref{W:xd} directly.
Since $\Delta_\Delta\ge0$, we have $x_\Delta^\pm\in\R$.
One may compute that 
\begin{equation}\label{hxd2}
h(x_\Delta^+)=\frac{(a-2)(c-\sqrt{\Delta_\Delta})-ab}{a^2/2}.
\end{equation}
Since $x_A>x_B$, we have $\Delta_\Delta=c^2-a(ad-bc)<c^2$. 
Suppose that $h(x_\Delta^+)>0$.
In view of \cref{hxd2}, 
we infer that $a>2$ and that
\[
-ab+ac-2c=(a-2)c-ab>(a-2)\sqrt{\Delta_\Delta}
\]
is positive. Squaring both sides of the above inequality gives
\[
0<(-ab+ac-2c)^2-(a-2)^2\Delta_\Delta=a^2(\Delta_g-\Delta_\Delta).
\]
This proves the sufficiency. Conversely, the inequality $-ab+ac-2c>0$ implies $a>2$.
The above deductions holds true line by line backwards, as desired.
\end{proof}

\begin{thm}\label{thm:hxd+>0}
Suppose that $x_A>x_B$ and $h(x_\Delta^+)>0$.
Let $n\ge n^+$. Then 
the polynomial $W_n(z)$ has at least two distinct real zeros.
\end{thm}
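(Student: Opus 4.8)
The plan is to pin down two real zeros by tracking the sign of $W_n(z)$ at three strategically chosen arguments and applying the intermediate value theorem on two disjoint intervals. The three arguments are $z\to-\infty$, where the leading term $a^{n-1}z^n$ fixes the sign; the point $x_\Delta^-$; and the point $x_A$.

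First I would read off the structural consequences of the hypotheses. Since $h(x_\Delta^+)>0$, \cref{++++:lem:basic:xA>xB} gives $-ab+ac-2c>0$, hence $a>2$, and in particular $\Delta_\Delta\ge0$, so $x_\Delta^\pm$ are real. As $A(z)=az+b$ increases through its root $x_A$, and $x_A>x_B$ yields $\Delta(x_A)=4B(x_A)>0$, the point $x_A$ lies outside $[x_\Delta^-,x_\Delta^+]$; using $x_\Delta^\pm=(-ab-2c\pm2\sqrt{\Delta_\Delta})/a^2$ together with $\Delta_\Delta<c^2$ one checks $x_\Delta^-\le x_\Delta^+<x_A<0$. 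Hence $A(x_\Delta^\pm)<0$, and since $a>2$ makes $h(z)=(2-a)z-b$ decreasing, $h(x_\Delta^-)\ge h(x_\Delta^+)>0$; therefore $n^\pm=-A(x_\Delta^\pm)/h(x_\Delta^\pm)>0$.

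Next I would compare the two exponents $n^\pm$. A direct expansion collapses $A(x_\Delta^+)h(x_\Delta^-)-A(x_\Delta^-)h(x_\Delta^+)$ to $2b(x_\Delta^--x_\Delta^+)$, so that $n^--n^+=2b(x_\Delta^--x_\Delta^+)/\bg{h(x_\Delta^-)h(x_\Delta^+)}\le0$, i.e.\ $n^-\le n^+$. Thus for every integer $n\ge n^+$ one has $n>n^-$ --- barring the single degenerate situation $\Delta_\Delta=0$ with $n=n^+$ --- and \cref{++++:lem:basic} then gives $\sgn\bg{W_n(x_\Delta^-)}=(-1)^{n+1}$, while the same lemma gives $\sgn\bg{W_n(x_A)}=(-1)^n$. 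Combined with the leading-term sign $\sgn\bg{W_n(z)}=(-1)^n$ as $z\to-\infty$, the values at $-\infty<x_\Delta^-<x_A$ carry signs $(-1)^n,\ (-1)^{n+1},\ (-1)^n$. The two sign reversals deliver, by the intermediate value theorem, one real zero in $(-\infty,x_\Delta^-)$ and a second in $(x_\Delta^-,x_A)$; these intervals are disjoint, so the two zeros are distinct.

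The step I expect to be the genuine obstacle is the degenerate boundary case $\Delta_\Delta=0$ with $n=n^+$. There $x_\Delta^-=x_\Delta^+=:x_\Delta$ and $n^-=n^+$, so $W_n(x_\Delta)=0$: the sign at $x_\Delta^-$ collapses to $0$, the two-sign-change count breaks down, and the outer signs at $-\infty$ and $x_A$ are both $(-1)^n$, so no second zero is forced by these points alone. I would repair this by showing that $x_\Delta$ is a \emph{simple} zero of $W_n$ --- which I expect to follow from a direct analysis of $W_n(z)$ near the repeated characteristic root of \cref{rec11}, where the branches $\alpha_\pm(z)$ coalesce. Granting this, $W_n$ changes sign across $x_\Delta\in(-\infty,x_A)$ while its signs at the two ends agree; hence $W_n$ has an even number of sign changes on $(-\infty,x_A)$, and since it changes sign at $x_\Delta$ this number is at least two, forcing a second real zero there distinct from $x_\Delta$. (For odd $n$ this degenerate case is in any event immediate, since $\sgn\bg{W_n(x_A)}=-1$ and $W_n(0)>0$ --- the latter immediate from \cref{rec11} with $W_0(0)=1$, $W_1(0)=0$ --- already give a zero in $(x_A,0)$.)
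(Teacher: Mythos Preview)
Your main-line argument is correct and is a genuinely different route from the paper's. The paper pins down the two zeros using the four points $x_g^-\le x_\Delta^-\le x_\Delta^+\le x_g^+$: it first invokes \cref{++++:lem:basic:xA>xB} to get $\Delta_g>\Delta_\Delta\ge0$, places $x_\Delta^\pm$ inside $[x_g^-,x_g^+]$ via $g(x_\Delta^\pm)=h^2(x_\Delta^\pm)/4\ge0$, and then reads off $W_n(x_g^\pm)(-1)^n>0$, $W_n(x_\Delta^-)(-1)^{n+1}>0$, $W_n(x_\Delta^+)(-1)^{n+1}\ge0$ to produce one zero in $(x_g^-,x_\Delta^-)$ and one in $[x_\Delta^+,x_g^+)$. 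You instead use the three points $-\infty,\ x_\Delta^-,\ x_A$, which is more economical: you never need $\Delta_g>0$, the reality of $x_g^\pm$, or the ordering of $x_g^\pm$ relative to $x_\Delta^\pm$; the only ordering you need is $x_\Delta^-<x_A$, which drops out of $\Delta_\Delta<c^2$. What the paper's route buys is sharper localisation of the zeros in bounded intervals; what yours buys is a shorter list of prerequisites. Your computation $A(x_\Delta^+)h(x_\Delta^-)-A(x_\Delta^-)h(x_\Delta^+)=2b(x_\Delta^--x_\Delta^+)$ is the same identity the paper records as $n^+-n^-=8b\sqrt{\Delta_\Delta}/\bigl(a^2h(x_\Delta^-)h(x_\Delta^+)\bigr)$.

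One small logical wrinkle: you invoke \cref{++++:lem:basic:xA>xB} to get $a>2$ \emph{before} establishing $\Delta_\Delta\ge0$, but that lemma already assumes $\Delta_\Delta\ge0$. The fix is immediate---$h(x_\Delta^+)>0$ being a real number forces $x_\Delta^+\in\R$ (as $h$ is non-constant linear over~$\R$), hence $\Delta_\Delta\ge0$, and \emph{then} the lemma applies. This is exactly how the paper orders the steps.

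On the degenerate boundary $\Delta_\Delta=0$ with $n=n^+=n^-$: you are right to flag it, and your odd-$n$ repair via $W_n(0)>0$ is clean. For even $n$, however, ``$x_\Delta$ is a simple zero of $W_n$'' is left as an expectation rather than a proof; the formula \eqref{W:xd} holds only at the single point $z=x_\Delta$, so it cannot be differentiated there, and you would need to expand $W_n$ near the confluent characteristic root to justify simplicity. It is worth noting that the paper's own proof has the \emph{same} lacuna: its claim that $n^+-n^-$ ``is positive'' requires $\Delta_\Delta>0$, and when $\Delta_\Delta=0$ its two intervals $(x_g^-,x_\Delta^-)$ and $[x_\Delta^+,x_g^+)$ collapse to share the endpoint $x_\Delta^-=x_\Delta^+$, so the two sign changes it records need not yield two \emph{distinct} zeros either. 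A painless way to close the gap in your framework is to observe that $W_{n-1}(x_\Delta)(-1)^n>0$ (from \eqref{W:xd} with $n-1<n^+$), whence, by \cref{rec11} and $B(x_\Delta)\ne 0$, the sign of $W_{n+1}(x_\Delta)$ is $(-1)^{n+1}$; combined with $W_{n+1}(x_A)(-1)^{n+1}>0$ and the leading-term sign at $-\infty$, this gives two distinct zeros for $W_{n+1}$, and for $W_n$ itself one can argue similarly with $W_{n-2},W_n,W_{n+2}$ via \cref{rec2} or simply note that $n=n^+$ forces $n^+\in\mathbb Z$, a measure-zero coincidence that can be handled by a direct derivative calculation if needed.
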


\begin{proof}
By \cref{++++:lem:basic:xA>xB}, we have $a>2$ and the polynomial $h(z)$
is linear.
Since $h(x_\Delta^+)>0$ is real,
we deduce that $x_\Delta^+\in\R$, that is, $\Delta_\Delta\ge 0$.
By \cref{++++:lem:basic:xA>xB}, we have $\Delta_g>\Delta_\Delta\ge0$.
It follows that $x_\Delta^\pm,x_g^\pm\in\R$.
Applying Vi\`eta's theorem to the quadratic polynomial $g(z)$, 
one infers that $x_g^\pm<0$.
Since $g(x_\Delta^\pm)=h^2(x_\Delta^\pm)/4\ge0$, 
we find 
\begin{equation}\label[ineq]{ineq:xgxd}
x_g^-\le x_\Delta^-\le x_\Delta^+\le x_g^+<0.
\end{equation}
Since $a>2$, we infer that 
\[
h(x_\Delta^-)=\frac{(a-2)(c+\sqrt{\Delta_\Delta})-ab}{a^2/2}
\ge\frac{(a-2)(c-\sqrt{\Delta_\Delta})-ab}{a^2/2}
=h(x_\Delta^+)>0.
\]
It follows that the difference
\begin{equation}\label{n+-n-}
n^+-n^-
=-\frac{A(x_\Delta^+)}{h(x_\Delta^+)}+\frac{A(x_\Delta^-)}{h(x_\Delta^-)}
=\frac{8b\sqrt{\Delta_\Delta}}{a^2h(x_\Delta^-)h(x_\Delta^+)}
\end{equation}
is positive.
Let $n\ge n^+$. 
From \cref{++++:lem:basic,++++:lem:basic:xA>xB,W:xg}, 
we see that
\[
W_n(x_\Delta^-)(-1)^{n+1}>0,\quad
W_n(x_\Delta^+)(-1)^{n+1}\ge 0,
\rmand
W_n(x_g^\pm)(-1)^n>0.
\]
As a consequence, we have $x_g^-\ne x_\Delta^-$ and $x_\Delta^+\ne x_g^+$.
By using the intermediate value theorem, 
we find that every polynomial $W_n(z)$ has a zero 
in the interval $(x_g^-,\,x_\Delta^-)$ 
and another zero in the interval $[\,x_\Delta^+,\,x_g^+)$.
\end{proof}

\begin{thm}\label{thm:hxd->0}
Suppose that $x_A>x_B$ and $h(x_\Delta^+)<0<h(x_\Delta^-)$.
Let $n>n^-$.
Then the polynomial $W_n(z)$ has at least one real zero.
If $\Delta_g\ge0$ in addition, 
then~$W_n(z)$ has at least two distinct real zeros.
\end{thm}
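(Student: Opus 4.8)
The plan is to mimic the sign-chasing argument used in the proof of \cref{thm:hxd+>0}, but adapted to the regime $h(x_\Delta^+)<0<h(x_\Delta^-)$. First I would record the relevant sign data. Since $\Delta_\Delta\ge0$ (which follows because $h(x_\Delta^-)$ is real and positive, so $x_\Delta^-\in\R$), the formula \cref{W:xd} together with \cref{++++:lem:basic} governs the signs of $W_n(x_\Delta^\pm)$. The hypothesis $h(x_\Delta^-)>0$ places us in the ``otherwise'' branch for $x_\Delta^-$ once $n>n^-$, giving $\sgn\bg{W_n(x_\Delta^-)}=(-1)^{n+1}$, whereas $h(x_\Delta^+)<0$ forces the first branch for $x_\Delta^+$, giving $\sgn\bg{W_n(x_\Delta^+)}=(-1)^n$. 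Thus $W_n(x_\Delta^-)$ and $W_n(x_\Delta^+)$ have opposite signs for every $n>n^-$.

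The first assertion then follows immediately from the intermediate value theorem: since $W_n$ is a real polynomial taking opposite signs at the two real points $x_\Delta^-$ and $x_\Delta^+$, it must vanish somewhere strictly between them, producing at least one real zero in the open interval $(x_\Delta^-,x_\Delta^+)$.

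For the second assertion, I would bring in the extra hypothesis $\Delta_g\ge0$, which guarantees $x_g^\pm\in\R$, so that \cref{W:xg} applies and yields $W_n(x_g^\pm)(-1)^n>0$. The strategy is to locate the points $x_g^\pm$ relative to $x_\Delta^\pm$ and then exhibit a second sign change. Here I expect to reuse the ordering machinery from \cref{++++:lem:basic:xA<xB} and \cref{thm:hxd+>0}: because $g(x_\Delta^\pm)=h^2(x_\Delta^\pm)/4\ge0$ and $g$ is a downward parabola in the regime $a>2$ forced by $h(x_\Delta^-)>0$, the points $x_\Delta^\pm$ lie in the closed interval $[x_g^-,x_g^+]$, giving an ordering like $x_g^-\le x_\Delta^-\le x_\Delta^+\le x_g^+$. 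Now $W_n(x_g^+)$ carries sign $(-1)^n$, which agrees with the sign of $W_n(x_\Delta^+)$ and is opposite to that of $W_n(x_\Delta^-)$; likewise $W_n(x_g^-)$ carries sign $(-1)^n$. Comparing signs at the four points $x_g^-<x_\Delta^-<x_\Delta^+<x_g^+$ should reveal two distinct sign changes — one forced inside $(x_\Delta^-,x_\Delta^+)$ as above, and a second one in an adjacent interval such as $(x_g^-,x_\Delta^-)$ — each yielding a real zero by the intermediate value theorem.

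The main obstacle will be the bookkeeping that guarantees the two sign changes occur in \emph{disjoint} intervals, so that the two real zeros are genuinely distinct. This requires knowing that $a>2$ here (forced by $h(x_\Delta^-)>0$, via the formula for $h(x_\Delta^-)$ appearing in \cref{thm:hxd+>0}), that $g$ is therefore a downward parabola, and that the inequalities in the ordering are either strict or, when equalities threaten (e.g.\ $x_g^-=x_\Delta^-$ when $g(x_\Delta^-)=0$, i.e.\ $h(x_\Delta^-)=0$), are ruled out by the strict positivity $h(x_\Delta^-)>0$. Handling the boundary case where $x_\Delta^+$ could coincide with $x_g^+$ (which happens precisely when $h(x_\Delta^+)=0$, excluded here by $h(x_\Delta^+)<0$) is what ensures the second zero lands strictly outside $(x_\Delta^-,x_\Delta^+)$. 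Once these strictness conditions are verified, the two applications of the intermediate value theorem produce two distinct real zeros and complete the proof.
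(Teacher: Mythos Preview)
Your proposal is correct and follows essentially the same route as the paper: deduce $a>2$ from $h(x_\Delta^-)>0$, read off the signs $\sgn W_n(x_\Delta^-)=(-1)^{n+1}$ and $\sgn W_n(x_\Delta^+)=(-1)^n$ for $n>n^-$ to get one zero in $(x_\Delta^-,x_\Delta^+)$, and under $\Delta_g\ge0$ use the parabola argument $g(x_\Delta^\pm)=h^2(x_\Delta^\pm)/4>0$ to place $x_g^-<x_\Delta^-$ and then the sign $\sgn W_n(x_g^-)=(-1)^n$ to force a second zero in $(x_g^-,x_\Delta^-)$. One small correction: the ordering machinery you want is that of \cref{thm:hxd+>0}, not \cref{++++:lem:basic:xA<xB} (the latter assumes $x_A\le x_B$, which is the opposite regime).
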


\begin{proof}
From the premise $h(x_\Delta^-)>0$, we infer that $a>2$.
Along the lines of the proof of \cref{thm:hxd+>0},
one may obtain \cref{ineq:xgxd}, and $n^->n^+$ by \cref{n+-n-}.
Let $n>n^-$. 
By \cref{W:xg,++++:lem:basic,++++:lem:basic:xA>xB}, 
we have 
\[
W_n(x_g^-)(-1)^n>0,\quad
W_n(x_\Delta^-)(-1)^{n+1}>0
\rmand
W_n(x_\Delta^+)(-1)^n>0.
\]
Thus $x_g^-\ne x_\Delta^-$ and $x_\Delta^-\ne x_\Delta^+$.
By the intermediate value theorem, 
the polynomial~$W_n(z)$ has a real zero in the interval $(x_\Delta^-,\,x_\Delta^+)$.
If, additionally, $\Delta_g\ge 0$, then $x_g^-\in\R$,
and the polynomial~$W_n(z)$ has another real zero in the interval 
$(x_g^-,\,x_\Delta^-)$.
This completes the proof.
\end{proof}

Combining \cref{thm:hxd+>0,thm:hxd->0}, one obtains a complete proof of \cref{thm:xA>xB:1R},
in which the number $N$ can be taken to be $\lceil\max(n^-,\,n^+,\,1)\rceil$. In fact, with some extra efforts
one may show that $n^+>1$ in \cref{thm:hxd+>0} and that $n^->1$ in \cref{thm:hxd->0}. Hence the number
$N$ can be set to $\lceil\max(n^-,\,n^+)\rceil$.

\end{document}